\documentclass[11pt]{article}
\usepackage[T1]{fontenc}
\usepackage{lmodern}
\usepackage{amsmath,amssymb,amsthm}
\usepackage[margin=30mm]{geometry}
\usepackage[colorlinks=true,linkcolor=blue,citecolor=blue,urlcolor=blue]{hyperref}

\newcommand{\SL}{\mathrm{SL}}
\newtheorem{theorem}{Theorem}
\newtheorem{lemma}{Lemma}
\newtheorem{corollary}{Corollary}
\newtheorem{proposition}{Proposition}
\theoremstyle{remark}
\newtheorem*{remark}{Remark}

\newcommand{\supp}{\operatorname{supp}}
\title{Local spectral gap and the Stuck--Zimmer conjecture}
\author{Simon Machado\footnote{ETH Zurich, Ramistrasse 101, 8006 Zurich; \texttt{smachado@ethz.ch}}, Yuval Yifrach\footnote{University of Zurich, Winterthurerstrasse 190, 8057 Zürich, Switzerland; \texttt{yuval.yifrach@math.uzh.ch}} }

\date{\today}
\begin{document}
\maketitle

\begin{abstract}
We prove two new cases of the Stuck--Zimmer conjecture. 

First, we settle the Stuck--Zimmer conjecture for ergodic probability
preserving actions of irreducible lattices (both uniform and non-uniform) in connected semisimple
real Lie groups with finite center, no compact factor and real
rank at least two. We furthermore extend the Stuck--Zimmer theorem
to all irreducible probability preserving actions of such groups
whenever one simple factor is locally isomorphic to $\SL_2(\mathbb R)$. In particular, this settles the Stuck--Zimmer conjecture for irreducible actions of $\SL_2(\mathbb{R}) \times \SL_2(\mathbb{R})$.

The proof relies on a new stabilizer rigidity criterion using
local spectral gap of projected stabilizers in a single simple
factor. Here local spectral gap is understood in the sense of
Boutonnet, Ioana and Salehi Golsefidy.
\end{abstract}

\section{Introduction}

In this article we prove a new criterion for a dynamical system to satisfy the conclusions of the Stuck--Zimmer theorem. Instead of relying on global gap results, we show how to reduce the proof to \emph{local spectral gap} in the sense of Boutonnet--Ioana--Salehi Golsefidy \cite{BISG} in one of the factors. Let us first state the two main consequences of this criterion.

\smallbreak

Work of Boutonnet--Ioana--Salehi Golsefidy provides conditions under which a dense subgroup has local spectral gap. Their result (\cite[Theorem A]{BISG}) shows that this criterion is satisfied under assumptions on the algebraicity of entries. Together with Margulis' arithmeticity and
Theorem~\ref{Theorem: A local spectral gap criterion} below,
we therefore obtain:

\begin{theorem}[Stuck--Zimmer for irreducible lattices]\label{Cor: Lattice case}
    Let $G$ be a connected semisimple real Lie group with finite center, no compact factors and real rank at least $2$ and let $\Gamma$ be an irreducible lattice in $G$. Let $\Gamma\curvearrowright(X,\mu)$ be an ergodic probability preserving action on a
standard probability space. Then for $\mu$ almost every $x$, the stabiliser $\Gamma_x$ at $x$ either has finite index in $\Gamma$ or is central.
\end{theorem}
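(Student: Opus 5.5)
The plan is to induce the $\Gamma$-action to a $G$-action and then apply the paper's local spectral gap criterion, Theorem~\ref{Theorem: A local spectral gap criterion}, to the induced system. Set $Y = G\times_\Gamma X$, the quotient of $G\times X$ by the $\Gamma$-action $\gamma\cdot(g,x)=(g\gamma^{-1},\gamma x)$. It carries a $G$-invariant probability measure $\nu$ because $\Gamma$ is a lattice, and it is ergodic because $\Gamma\curvearrowright X$ is ergodic. Stabilizers are conjugates: $G_{[g,x]} = g\Gamma_x g^{-1}$. So it suffices to prove that $\nu$-almost every stabilizer in $G\curvearrowright Y$ is either a lattice of finite covolume commensurable to a conjugate of $\Gamma$ (equivalently, $\Gamma_x$ has finite index in $\Gamma$) or is central.

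Irreducibility of $Y$ follows from irreducibility of $\Gamma$. Each simple factor $G_i$ acts ergodically on $G/\Gamma$ by Moore ergodicity. From this I would show that each $G_i$, and more generally each proper normal subgroup, acts ergodically on $Y$, using that $G_i$-invariant functions on $Y$ correspond to $\Gamma$-invariant functions on $G_i\backslash G\times X$, and that $\Gamma$ is dense in $G/G_i$.

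The heart of the argument is checking the hypothesis of Theorem~\ref{Theorem: A local spectral gap criterion}: in some simple factor $G_1$, the projected stabilizers have local spectral gap. Since $\Gamma$ is irreducible and $\mathrm{rank}_{\mathbb R}G\ge 2$, Margulis arithmeticity makes $\Gamma$ arithmetic. After passing to the adjoint group and a finite-index subgroup, $\Gamma$ is commensurable to $\mathbf H(\mathcal O_K)$ for a number field $K$ and a $K$-group $\mathbf H$, and its projection to each noncompact simple factor $G_1$ has matrix entries in the ring of algebraic numbers $\overline{\mathbb Q}$. When $G$ is not simple, the projection of $\Gamma$ (and hence of any subgroup $\Gamma_x$) to $G_1$ lies in $G_1(\overline{\mathbb Q})$. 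Then for any subgroup $\Delta\le\Gamma$ whose projection to $G_1$ is dense, \cite[Theorem A]{BISG} gives local spectral gap of that projection on $G_1$, in the form the criterion requires.

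If $G$ is simple of higher rank, the classical Stuck--Zimmer theorem for lattices (via Nevo--Zimmer and property (T)) already applies. Alternatively, one can apply the same algebraicity argument directly to the dense projected stabilizers.

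The main obstacle is matching hypotheses. The criterion presumably asks for local spectral gap only for stabilizers whose projections to $G_1$ are dense, or it asks for a uniform version. So one must argue that for almost every $y$, either $\mathrm{pr}_{G_1}(G_y)$ is dense in $G_1$, or a dichotomy (discrete or central) already follows. This would use the paper's preliminary reduction, via Borel density or invariant random subgroup arguments, to the case of dense projections. One must also check that BISG's constants are allowed to depend on the subgroup, which they are for finitely generated algebraic dense subgroups. Finite generation of $\Gamma_x$ is not automatic. I would handle it by passing to a finitely generated dense subgroup of the projection, since local spectral gap passes to overgroups. Finally, the criterion yields that a.e.\ $G_y$ is a lattice or central, and translating back gives the conclusion for $\Gamma_x$.
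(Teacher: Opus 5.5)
\textbf{There is a genuine gap} in your claim that the induced action $G\curvearrowright G\times_\Gamma X$ is irreducible. You identify $G_i$-invariant functions on $G\times_\Gamma X$ with $\Gamma$-invariant functions on $(G_i\backslash G)\times X$ under the diagonal action. That identification is correct. But $\Gamma$ being ergodic on each coordinate separately (density of the projection on the first, ergodicity on $X$ on the second) does not make the diagonal action ergodic. In fact the unconditional claim is false.

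Here is a counterexample. Write $G=G_1\times G_2$, take a lattice $\Lambda<G_2$, and let $\Gamma$ act on $X=G_2/\Lambda$ through $p_2$.
\begin{itemize}
\item This action is ergodic, because $p_2(\Gamma)$ is dense.
\item The map $[g,y]\mapsto p_2(g)y$ is a well-defined $G$-equivariant factor map $G\times_\Gamma X\to G_2/\Lambda$, and $G_1$ acts trivially on the target. Equivalently, $(h,y)\mapsto hy$ is a nonconstant $\Gamma$-invariant function on $G_2\times G_2/\Lambda$.
\item Hence $G_1$ is not ergodic on the induced space.
\end{itemize}
In this example the stabilizers are almost surely trivial. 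That is consistent with the theorem, and it shows what the correct statement is: the dichotomy of Lemma~\ref{lem:induction-irreducibility}. Either $\Gamma_x$ is central almost surely, or the induced action is irreducible. This is not elementary. The paper applies \cite[Corollary~6.10]{BGL} to the stabilizer distribution $g\Gamma_xg^{-1}$, which is a nontrivial discrete ergodic IRS meeting every proper factor trivially (by irreducibility of $\Gamma$).

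The gap propagates through the rest of your argument. Irreducibility is what gives density of the projected stabilizers (Lemma~\ref{lem:dense-projections} uses ergodicity of the complementary factors). It also supplies the ergodicity hypotheses of Theorem~\ref{Theorem: A local spectral gap criterion}, including the Hartman--Tamuz co-amenability step inside it. Your deferral to ``the paper's preliminary reduction'' for dense projections therefore rests on the same missing ingredient.

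The other steps of your outline match the paper: reduction to trivial center, classical Stuck--Zimmer \cite{SZ} when $G$ is simple, arithmeticity plus \cite[Theorem~A]{BISG} for local spectral gap, and concluding finite index from $G_y=g\Gamma_xg^{-1}$ being a lattice. One small correction: your alternative route for simple $G$ does not work. A discrete subgroup of a simple noncompact $G$ is never dense there, so the criterion has nothing to apply to, and only the classical route is available.
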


When one or more factors of $G$ are $\SL_2(\mathbb{R})$, we can establish local spectral gap without assuming arithmeticity. We highlight that \emph{no} algebraicity of entries is needed here. In particular:

\begin{theorem}[Stuck--Zimmer with an $\SL_2(\mathbb R)$ factor]\label{Cor: SL2 case}
    Let $G$ be a connected semisimple real Lie group
with finite center, no compact factors and at least two simple factors. Assume that one of the factors is isogenous to $\mathrm{SL}_2(\mathbb{R})$. Let
$G\curvearrowright(X,\mu)$ be a probability preserving action on a
standard probability space. Assume that
the $G$-action is irreducible (i.e. every almost simple factor acts ergodically).

Then either the action is concentrated on a fixed point, or for almost every $x \in X$, $G_x$ is either a lattice or central.
\end{theorem}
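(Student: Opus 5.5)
We will derive Theorem~\ref{Cor: SL2 case} from the local spectral gap criterion, Theorem~\ref{Theorem: A local spectral gap criterion}, applied with the distinguished simple factor $G_1$ taken to be the one isogenous to $\SL_2(\mathbb R)$. Write $G = G_1 \times G'$ up to finite center. Passing to a finite cover or quotient changes stabilizers only by central elements, so we may assume $G_1 = \SL_2(\mathbb R)$ or $\mathrm{PSL}_2(\mathbb R)$.

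The input the criterion needs is that, for almost every $x$, the projection $\overline{\pi_1(G_x)}$ of the stabilizer to $G_1$ is either discrete or central, or else that $\pi_1(G_x)$ has local spectral gap in $G_1$.

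\textbf{Step 1: dichotomy for projected stabilizers.} Using irreducibility and ergodicity of the $G'$-action, we show that the closed subgroup $H_x=\overline{\pi_1(G_x)}$ is, up to conjugacy, almost surely constant. The map $x\mapsto H_x$ is $G$-equivariant into $\mathrm{Sub}(G_1)$ and $G'$-invariant, since $G'$ commutes with $G_1$. Ergodicity of $G'$ therefore makes it essentially constant, and hence normalized by $G_1$, which acts on it by conjugation. So $H_x$ is a fixed closed normal subgroup of $G_1$. Thus either $H_x$ is central, or $H_x=G_1$, meaning $\pi_1(G_x)$ is dense in $G_1$.

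In the central case, a standard argument shows that $G_x\cap G'$ is essentially all of the stabilizer up to center. Irreducibility together with Howe--Moore (ergodicity of $G_1$) then gives either a fixed point or central stabilizers; the lattice alternative is excluded here.

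\textbf{Step 2: local spectral gap for dense subgroups of $\SL_2(\mathbb R)$.} In the dense case we need that $\pi_1(G_x)$, or a suitable finitely generated subgroup $\Delta$ of it, has local spectral gap in $\SL_2(\mathbb R)$ without any algebraicity assumption. This is the main obstacle. We would not try to prove it in general. Instead we would invoke the known result for $\SL_2(\mathbb R)$ due to Bourgain--Gamburd type arguments as extended by Boutonnet--Ioana--Salehi Golsefidy, together with the recent non-algebraic results for $\mathrm{SU}(2)$ and $\SL_2$. If needed, we would ensure the relevant subgroup contains elements suitable for these theorems, using that $G_x$ projects densely and is large: it contains many elements by an invariant-random-subgroup (IRS) argument.

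The essential point is to pick, measurably in $x$, finitely many elements of $G_x$ whose $G_1$-projections generate a dense subgroup with local spectral gap. Since $G_1$ is rank one and $\SL_2$, density implies Zariski density. We then use that local spectral gap holds for every dense finitely generated subgroup of $\SL_2(\mathbb R)$: for non-algebraic entries, via the Benoist--de Saxcé-type gap for $\SL_2$; otherwise via \cite[Theorem A]{BISG}.

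\textbf{Step 3: conclusion.} With local spectral gap verified almost surely, Theorem~\ref{Theorem: A local spectral gap criterion} yields that $G_x$ is a lattice (or central) for almost every $x$. Combined with Step~1, this gives the trichotomy: fixed point, lattice, or central stabilizers. We expect the verification in Step~2, and its measurable selection, to require the most care.
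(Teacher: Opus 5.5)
Your overall architecture matches the paper: apply Theorem~\ref{Theorem: A local spectral gap criterion} with $G_1$ the $\SL_2(\mathbb R)$ factor, use ergodicity of $G'$ to make $\overline{p_1(G_x)}$ a fixed normal subgroup, and dispose of the central case. The genuine gap is Step~2. No theorem says that every dense finitely generated subgroup of $\SL_2(\mathbb R)$ has local spectral gap. \cite[Theorem~A]{BISG}, Bourgain--Gamburd and Benoist--de~Saxc\'e all assume algebraic entries. Removing that assumption for general dense subgroups is the open Diophantine (non-concentration) problem, the counterpart of the Gamburd--Jakobson--Sarnak question for $\mathrm{SU}(2)$. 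You also cannot fall back on the algebraic case. Stabilizers of an arbitrary irreducible $G$-action have no a priori reason to have algebraic entries; that would essentially presuppose they are arithmetic lattices, which is what you are trying to prove. So, as written, nothing in your argument supplies local spectral gap, and the $\SL_2$ hypothesis plays no real role.

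The missing idea is that discreteness of $G_x$ in $G_1\times G_2$, together with dense projections, replaces algebraicity. The paper proves in Lemma~\ref{lem:height-separation} that $\|\operatorname{Ad}(\gamma_1)-I\|\geq cN_{G_2}(\gamma_2)^{-B}$ for every $\gamma\in G_x$ with $\gamma_1$ noncentral. The proof builds, at each scale $s$, a bounded ``mesh'' of elements of $G_x$ with three properties: their $G_2$-parts are $s$-close to $e$, their $G_1$-heights are $O(s^{-b})$, and their $G_1$-parts have trivial common centralizer. An element violating the bound nearly commutes with the mesh in both factors, so by discreteness it commutes exactly, forcing $\gamma_1=e$. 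Hence nontrivial words of length $n$ in a fixed finite subset of $p_1(G_x)$ stay outside $e^{-Dn}$-neighbourhoods of the identity. This is the Diophantine input that algebraicity normally provides.

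The $\SL_2$ hypothesis enters next. Proper connected subgroups of $\mathrm{PSL}_2(\mathbb R)$ satisfy the law $[[x,y],x[x,y]x^{-1}]=e$. Words lying $e^{-Cn}$-close to such a subgroup therefore satisfy the law exactly, so they lie in a cyclic subgroup of a free group. This gives the escape-from-subgroups bound required by \cite[Theorem~6.7]{BISG}, which then yields local spectral gap.

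Separately, the criterion has two further hypotheses that you do not establish:
\begin{itemize}
\item discreteness of $G_x$, which the paper gets from Borel density for IRSs plus irreducibility (the alternative being a fixed point);
\item density of $p_2(G_x)$ in the possibly non-simple $G_2$, which is Lemma~\ref{lem:dense-projections}.
\end{itemize}
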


\begin{remark}
    In particular, this establishes the Stuck--Zimmer conjecture unconditionally for $\mathrm{SL}_2(\mathbb{R}) \times \mathrm{SL}_2(\mathbb{R})$.
\end{remark}
\smallbreak

\subsection{Historical background} Margulis' normal subgroup theorem \cite{Margulis78,Margulis79}
already exhibits the difficulty caused by the absence of
property~$(T)$. For a noncentral normal subgroup $N$ of an
irreducible higher rank lattice $\Gamma$, its proof establishes
that $\Gamma/N$ is amenable and has property~$(T)$, hence is
finite. The second assertion is immediate when $\Gamma$ itself
has property~$(T)$. Margulis' additional argument for products
of rank one groups shows how the interaction between the
factors can supply the missing rigidity \cite{Margulis79}. Our proof will use a similar transfer of almost invariance between the factors, with local spectral gap giving quantitative control.

In 1994, Stuck and Zimmer \cite{SZ} proved that every faithful
irreducible probability preserving action of a connected semisimple
real Lie group with finite center, no compact factors, real rank
at least two and property $(T)$ is either essentially free or
essentially transitive. Their corresponding theorem for lattices
extends Margulis' normal subgroup theorem from normal subgroups
to stabilizers. It is widely believed that property (T) is unnecessary and that sufficient irreducibility should be enough: the \emph{Stuck--Zimmer conjecture} asks precisely whether the
property $(T)$ assumption can be removed, see \cite[p.~731]{SZ} and
\cite[Conjecture~5.3]{GelanderSurvey}.

Bader and Shalom \cite{BS} extended the intermediate factor method
to products of locally compact groups.
Creutz \cite{CreutzProducts} and, independently, Hartman and Tamuz
\cite{HT} showed that, in the Lie group setting, it suffices that
one simple factor have property $(T)$. The co-amenability argument has its origins in the intermediate
factor method of Stuck and Zimmer \cite{SZ}. We use it in the
product form proved by Hartman and Tamuz
\cite[Theorem~1]{HT}, which gives co-amenability of the stabilizers
under our hypotheses. The question is then whether these
stabilizers have finite covolume. Our criterion obtains this
last step from local spectral gap in one factor.
\smallbreak
More recently, Fr\k{a}czyk and Gelander \cite{FG} proved that
confined discrete subgroups of connected center-free simple Lie
groups of real rank at least two are lattices.
Bader, Gelander and Levit \cite{BGL} developed a spectral gap
theorem for products and obtained a version of the Stuck--Zimmer
conjecture under a stronger irreducibility assumption.
For lattice actions, Dogon, Glasner, Gorfine, Hanany and Levit
 established in the breakthrough work \cite{DGGHL}the conjecture for non-uniform irreducible
higher-rank lattices in semisimple real Lie groups through
character rigidity.

There is also a geometric reason to care about this conjecture.
Invariant random subgroups allow one to study locally symmetric
spaces from the point of view of a random basepoint.
In \cite{7samurai} for instance, this viewpoint is combined with the
Stuck--Zimmer theorem and property~$(T)$ to control limits of
these spaces and study the asymptotic behaviour of Betti
numbers and spectral invariants.
\smallbreak

\subsection{A criterion from local spectral gap in a factor}
The common ingredient in these two results is the following criterion. It relies on the notion of \emph{local spectral gap} which we recall now. countable dense subgroup $\Delta$ of a connected
simple Lie group $G$ has local spectral gap if, for some
(equivalently, every) compact neighborhood $B$ of the identity,
there are a finite set $E\subset\Delta$ and $\kappa>0$ such that
\[
 \int_B|F(b)-\beta_B(F)|^2\,d\beta_B(b)
 \leq\kappa\int_B\max_{\gamma\in E}
       |F(b\gamma)-F(b)|^2\,d\beta_B(b)
\]
for every $F\in L^2_{loc}(G)$, where $\beta_B$ denotes normalized
Haar measure on $B$.

\begin{theorem}[Rigidity of stabilizers via local spectral gap]\label{Theorem: A local spectral gap criterion}
    Let $G=G_1\times G_2$ be a product of connected semisimple real Lie groups
with finite center and no compact factors, with $G_1$ simple. Let
$G\curvearrowright(X,\mu)$ be a probability preserving action on a
standard probability space. Assume that
both the  $G_1$-action  and $G_2$-action are ergodic and that, for almost every $x$, the stabilizer
$G_x$ is discrete, $p_1(G_x)$  is dense in $G_1$ and has local spectral
gap on $G_1$ and $p_2(G_x)$ is dense in $G_2$.

Then for almost every $x \in X$, $G_x$ is  a lattice.
\end{theorem}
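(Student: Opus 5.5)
We plan to show that the stabilizers have finite covolume by building, from the $G$-action, an almost invariant vector for $p_1(G_x)$ acting on $L^2$ of a neighborhood in $G_1$. Local spectral gap will then force that vector to be close to constant. A measure-theoretic covolume argument converts this into finiteness.

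\textbf{Step 1: reduction to a random discrete subgroup.} The stabilizer map $x\mapsto G_x$ pushes $\mu$ to an ergodic invariant random subgroup $\nu$ of $G$, supported on discrete subgroups $H$. For each such $H$ we consider $G/H$ with its $G$-invariant measure $m_H$, and we want to show $m_H(G/H)<\infty$. If the covolume were infinite on a positive-measure (hence, by ergodicity of $G_1$ and $G_2$, full-measure) set, we will derive a contradiction. Following the Hartman--Tamuz/Stuck--Zimmer philosophy, we can also use that $H$ is co-amenable. By \cite[Theorem~1]{HT}, $L^2(G/H)$ then almost contains invariant vectors for $G$; in particular it almost contains vectors invariant under $G_2$.

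\textbf{Step 2: transfer of almost invariance from $G_2$ to $G_1$.} We take unit vectors $f_n\in L^2(G/H)$ that are almost invariant under a compact generating set of $G_2$. We then average over larger and larger compact pieces of $G_2$, and use density of $p_2(H)$ in $G_2$, to replace $f_n$ by functions that become almost constant along $G_2$-orbits. Such functions descend to functions $F_n$ on $G_1$, defined on fundamental-domain pieces $bH$ with $b$ in a compact neighborhood $B\subset G_1$. For these $F_n$ the defect $\max_{\gamma\in E}|F_n(b\gamma)-F_n(b)|$, with $E\subset p_1(H)$ finite, is small in $L^2(B)$. Right translation by $p_1(\gamma)$ corresponds to translation by $\gamma\in H$ up to a $G_2$-component, and that $G_2$-component is absorbed by the $G_2$-almost-invariance. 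The local spectral gap inequality for $p_1(H)$ then says that $F_n$ is $L^2$-close to a constant on $B$, and on every translate of $B$ by the same argument, with constants uniform in $n$.

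\textbf{Step 3: contradiction with infinite covolume, and the main obstacle.} Chaining the estimate over overlapping translates of $B$ shows that the $f_n$ are close to being $G$-invariant in a \emph{non-dissipating} sense: their mass cannot escape to infinity in $G/H$. If the covolume were infinite, $L^2(G/H)$ would have no nonzero $G$-invariant vectors. By Howe--Moore-type mixing, or simply because constants are not in $L^2$ when $m_H$ is infinite, unit almost invariant vectors must spread out, and this contradicts the local uniform control. Hence $H$ is a lattice.

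The main obstacle is Step 2. Making the $G_2$-averaging precise requires handling the fact that $p_1(H)$ and $H$ differ by the fibre $H\cap G_2$ and the twisting given by $p_2$. It also requires obtaining a uniform $\kappa$ and $E$ for $\nu$-almost every $H$. We would get this uniformity by measurable selection together with ergodicity, so that $\kappa$ and $|E|$ can be bounded on a set of positive measure. This will likely need to be done on the level of $X$ itself rather than individual cosets, using $\mu$-integrated versions of the functions.
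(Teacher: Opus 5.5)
Your Steps 1--2 contain several of the right ingredients: Hartman--Tamuz co-amenability, the stabilizer identity $u(b\gamma_1y)-u(by)=u(\gamma_2^{-1}by)-u(by)$ that turns $G_2$-almost invariance into almost invariance under $p_1(G_y)$, and a measurable choice of the gap data. But the step that is supposed to use infinite covolume does not work.

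Note that the conclusion of your Step 2 for the vectors $f_n$ (closeness to a constant on translates of $B$) already follows from their almost $G$-invariance, with no local spectral gap at all. Co-amenability produces exactly such vectors, so this cannot contradict infinite covolume. Likewise, ``chaining'' local estimates over overlapping translates accumulates errors. A unit vector in an infinite-measure space can also be locally almost constant at a fixed scale, so ``constants are not in $L^2$'' gives nothing.

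What is missing is an estimate valid \emph{for every} $u$, which can then be applied to translates of $u$. The paper translates by a regular one-parameter subgroup $a_t<G_1$. It commutes with $G_2$ and hence preserves the defect $D_Q$. Local gap, integrated over base points, gives $\|M_C\pi(a_t)u\|_2^2\ge\frac12\|u\|_2^2-\kappa D_Q(u)^2$ for arbitrarily large $t$. On the other hand, Howe--Moore gives $\|M_C\pi(a_t)u\|_2\to0$. This is because $L^2(G/H)$ has no $G_1$-invariant vectors when $H$ has infinite covolume (this uses density of $p_2(H)$), and $a_t^{-1}ba_t\to\infty$ for almost every $b$. Together these yield the uniform displacement bound $D_Q(u)\ge\|u\|_2/\sqrt{2\kappa}$, and that is what contradicts almost invariance.

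The phrase ``arbitrarily large $t$'' is exactly where your acknowledged obstacle enters, and resolving it needs an idea you do not supply. The gap data $(E_y,\kappa(y))$ cannot be expected to be uniform along an orbit: transporting $E_y$ to $gy$ conjugates its $G_2$-components by $g_2$. So measurable selection gives a compact $Q\subset G_2$ and a constant $\kappa$ only for base points in a set $X_0$ with $\mu(X_0)>1/2$, not ``on every translate of $B$''.

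You should apply the gap inequality to the orbit functions $F_y(b)=u(by)$ with $y\in X_0\cap Y$, and integrate over $y$. Restricting to one coset or ``descending to $G_1$'' carries no information, since a $G_2$-invariant function on $G/H$ is constant by density of $p_1(H)$. The integration yields only the weighted estimate
\[
 \|M_Cu\|_2^2+\kappa D_Q(u)^2\ \ge\ \int_Y w\,|u|^2\,dm_Y,
 \qquad w(y)=\int_C\mathbf 1_{X_0}(c^{-1}y)\,d\beta_C(c).
\]
The pointwise ergodic theorem for $a_t$ on $X$, transferred to a generic orbit via Fubini, shows that the time averages of $\int_Y w\,|\pi(a_t)u|^2\,dm_Y$ tend to $\mu(X_0)\|u\|_2^2>\frac12\|u\|_2^2$. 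This gives the lower bound above.

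Finally, since $\gamma_2$ depends on the base point, the defect must be the pointwise one, $D_Q(u)=\bigl\|\sup_{\ell\in Q}|u(\ell^{-1}\cdot)-u|\bigr\|_2$. Almost invariance under each $\ell$ separately is not enough. The paper makes this defect tend to zero by convolving $f_n$ with a fixed $\psi\in C_c(G_2)$.
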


\smallbreak

The proof starts from the almost invariant vectors supplied by
Hartman--Tamuz's co-amenability theorem. Since stabilizer elements
fix points, almost invariance under $G_2$ gives almost invariance
under the projection of each stabilizer to $G_1$. Local spectral
gap then forces these vectors to be almost constant on compact
subsets of $G_1$-orbits, wherever the gap is uniformly controlled.
If a vector has substantial mass there, its averages over these
compact sets must therefore be large in $L^2$. On the one hand,
we use the ergodic theorem to bring a definite proportion of
the vector's mass into this set of controlled local spectral gap,
after averaging along a flow in $G_1$. On the other hand, if the
orbit has infinite volume, Howe--Moore makes the corresponding
compact averages tend to zero along the same flow, giving a
contradiction. 
\smallbreak
Theorem~\ref{Theorem: A local spectral gap criterion} also
suggests a route to the full Stuck--Zimmer conjecture for
actions of semisimple groups. Our estimate for escape from identity neighbourhoods
(Lemma~\ref{lem:height-separation}) already applies to arbitrary
simple factors. To obtain local spectral gap, we still need
quantitative escape from neighbourhoods of all proper connected
closed subgroups, as required by \cite[Theorem~6.7]{BISG}.
For $\SL_2(\mathbb R)$, we use the fact that these subgroups
are $2$-step solvable. In larger rank one groups, proper connected
subgroups can themselves be semisimple, so this argument does
not extend directly. Obtaining the corresponding escape estimates
would require new ideas and further work (as is illustrated by the analogous story of passing from product theorems for $\SL_2(\mathbb F_p)$ to finite simple groups of Lie type \cite{Helfgott,BGT,PS}) and is beyond the scope of this paper. For the conjecture, it would
be enough to establish local spectral gap in one factor
for the projected stabilizers arising in irreducible
probability preserving actions.
\subsection{Organisation of the paper}
We start in Section~\ref{sec:criterion} by proving the local spectral gap criterion (Theorem \ref{Theorem: A local spectral gap criterion}).

In Section \ref{sec:corollaries} we then establish the two main theorems. Outside the classical simple case, Theorem \ref{Cor: Lattice case} is a consequence of three facts: the usual induction lifts the $\Gamma$-system to an irreducible $G$-system when its stabilizers are not central; Margulis' arithmeticity implies that $\Gamma$ is arithmetic; and \cite{BISG} then gives local spectral gap out of the box.

For Theorem \ref{Cor: SL2 case}, we do not want to rely on algebraicity of coefficients, as it is not known \emph{a priori} in this case. We rely instead on a novel counting argument for discrete subgroups in products, which gives the \emph{escape from subgroups} estimates needed to apply \cite{BISG} to their projections to a factor.

\subsection{Acknowledgement} The authors are grateful to Emmanuel Breuillard, Mikolaj Fraczyk and Arie Levit for valuable advice. 
SM acknowledges support from ETH Zurich, and OpenAI through the``OpenAI for academic research'' scheme. YY was supported by SNF grant 200020--212617.

GPT6 Astra was used to help with the literature review, explore research leads, carry out some computations and proofread the manuscript. The main novel ideas of the paper are the authors' own  (and mostly predate modern LLMs) and computations carried out by LLMs were, in the end, not used. 

\section{The local spectral gap criterion}\label{sec:criterion}

In this section we prove the local spectral gap criterion (Theorem \ref{Theorem: A local spectral gap criterion}). Thus, throughout this section let $G=G_1\times G_2$ be a product of connected semisimple real Lie groups
with finite center and no compact factors, with $G_1$ simple. Let
$G\curvearrowright(X,\mu)$ be a probability preserving action on a
standard probability space. Assume that
both the  $G_1$-action  and $G_2$-action are ergodic and that, for almost every $x$, the stabilizer
$G_x$ is discrete, $p_1(G_x)$  is dense in $G_1$ and has local spectral
gap on $G_1$. Assume also that $p_2(G_x)$ is dense in $G_2$.

Recall that a countable dense subgroup $\Delta$ of a connected
simple Lie group $G_1$ has local spectral gap if, for some
(equivalently, every) compact neighborhood $B$ of the identity,
there are a finite set $E\subset\Delta$ and $\kappa>0$ such that
\[
 \int_B|F(b)-\beta_B(F)|^2\,d\beta_B(b)
 \leq\kappa\int_B\max_{\gamma\in E}
       |F(b\gamma)-F(b)|^2\,d\beta_B(b)
\]
for every $F\in L^2_{loc}(G_1)$, where $\beta_B$ denotes normalized
Haar measure on $B$. We use here for convenience the right translation
form of \cite[Propositions~2.2--2.3 and Remark~1.5]{BISG}.

Fix once and for all a regular split one parameter subgroup $(a_t)_{t\in\mathbb R}$
of $G_1$. Its action on $(X,\mu)$ is ergodic by Howe--Moore
\cite[Theorem~1.1]{Ciobotaru}. We will choose one orbit
$Y=Gx\simeq G/G_x$ and work in $L^2(Y,m_Y)$, where $Y$ carries the
quotient topology and $m_Y$ is quotient Haar measure. Write $\pi(g)u(y):=u(g^{-1}y)$.
\smallbreak

For a compact $C\subset G_1$ of positive Haar measure, let $\beta_C$
be normalized Haar measure on $C$ and set the Markov operator
\[
 M_Cu(y):=\int_Cu(cy)\,d\beta_C(c).
\]
For $u\in C(Y)\cap L^2(Y)$ and a compact $Q\subset G_2$, write
\[
 D_Q(u):=\left\|\sup_{\ell\in Q}
             |u(\ell^{-1}\,\cdot)-u|\right\|_{L^2(Y)}.
\]
\subsection{From local spectral gap to an orbit estimate}

The first lemma exploits the following elementary observation: for every
$y\in Y $ where $Y = Gx$ for some $x \in X$, and $\gamma=(\gamma_1,\gamma_2)\in G_y$ we have
$\gamma_1y=\gamma_2^{-1}y$. Since the two factors commute, for
$u\in C(Y)\cap L^2(Y)$ we can therefore write
\[
 u(b\gamma_1y)-u(by)
 =u(\gamma_2^{-1}by)-u(by), \qquad b\in G_1.
\]
The left-hand side is the kind of difference that appears in the local
spectral gap inequality on $G_1$, while the right-hand side involves
translations in $G_2$. So the idea is to use local spectral gap to control
local variance along $G_1$ by the defect $D_Q(u)$.\footnote{This transfer is reminiscent of Margulis' argument
for products \cite[Theorem~1.3.2]{Margulis79}. There, the
representation being induced is trivial on the normal subgroup,
and smoothing together with density propagates almost
invariance. Here the relation comes from the stabilizer at
each point, and local spectral gap gives a quantitative
estimate for the transfer.}

To make this work, we still need to choose a set of positive measure on
which the gap constants and the required $G_2$-components are uniformly
controlled, and then integrate on the orbit. The next lemma shows
precisely what estimate this gives: a bound for the weighted $L^2$-mass
of $u$ in terms of its compact average $M_Cu$ and its defect $D_Q(u)$.

\begin{lemma}[Local gap and compact averages]\label{lem:local-transfer}
For every compact $C\subset G_1$ of positive Haar measure, there are a
measurable set $X_0\subset X$ with $\mu(X_0)>1/2$, a compact
$Q\subset G_2$, and $\kappa>0$ with the following property. Put
\[
 w(y):=\int_C\mathbf1_{X_0}(c^{-1}y)\,d\beta_C(c).
\]
On every orbit $Y=Gx$ with discrete stabilizer, every $u\in C(Y)\cap L^2(Y)$
with $D_Q(u)<\infty$ satisfies
\begin{equation}\label{eq:local-transfer}
 \|M_Cu\|_2^2+\kappa D_Q(u)^2
 \geq\int_Y w(y)|u(y)|^2\,dm_Y(y).
\end{equation}
Moreover, $0\leq w\leq1$ and $\int_Xw\,d\mu=\mu(X_0)>1/2$.
\end{lemma}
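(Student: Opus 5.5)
The plan is to apply the local spectral gap inequality pointwise along $G_1$-orbit pieces through points of a well-chosen set $X_0$. The stabilizer relation $\gamma_1 z=\gamma_2^{-1}z$ turns the $G_1$-differences into $G_2$-differences, and integrating over the orbit then produces \eqref{eq:local-transfer}.

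\textbf{Step 1: choice of $X_0$, $Q$, $\kappa$.} Fix the compact $C\subset G_1$ of positive measure, and fix a compact identity neighbourhood $B\supset C$. Since $\beta_C\le \beta_B(C)^{-1}\beta_B$ on $C$, and the mean over $C$ minimises the $L^2(\beta_C)$-distance to constants, we get
\[
\int_C|F-\beta_C(F)|^2\,d\beta_C\le \beta_B(C)^{-1}\int_B|F-\beta_B(F)|^2\,d\beta_B .
\]
So local spectral gap on $B$ yields a variance bound on $C$. The price is a factor $\beta_B(C)^{-1}$, and the right-hand side is integrated over $B$ rather than $C$; this only changes constants.

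For a.e.\ $x$, the group $p_1(G_x)$ has local spectral gap on $B$. We therefore get a finite $E\subset p_1(G_x)$ and $\kappa_x$, and we lift each element of $E$ to some $\gamma=(\gamma_1,\gamma_2)\in G_x$. For $n\in\mathbb N$, let $Q_n$ be an exhaustion of $G_2$ by compacts, and let $X_n$ be the set of $x$ admitting such data with $\kappa_x\le n$ and all lifts having $\gamma_2\in Q_n$.

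These sets increase to a conull set, so some $X_0:=X_n$ has $\mu(X_0)>1/2$. We then take $Q:=Q_n$ and $\kappa:=n\,\beta_B(C)^{-1}\beta_B(C)^{-1}$ (the second factor accounts for converting the $B$-integral below). Measurability of $X_n$ is where I expect the main technical nuisance. I would handle it by noting that $x\mapsto G_x$ is Borel into the Chabauty space of closed subgroups. The condition is then a countable union over finite tuples from a countable dense set of $G_1$-test data; the gap inequality only needs to be tested on a countable dense family of $F$.

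\textbf{Step 2: pointwise estimate.} Take $z\in Y\cap X_0$ and set $F(b):=u(bz)$. For a lift $\gamma\in G_z$ with $\gamma_1\in E$, commutation of the factors gives
\[
F(b\gamma_1)-F(b)=u(b\gamma_1z)-u(bz)=u(\gamma_2^{-1}bz)-u(bz).
\]
This is bounded in absolute value by $S(bz)$, where $S(y):=\sup_{\ell\in Q}|u(\ell^{-1}y)-u(y)|$. Hence
\[
\int_C|u(cz)|^2\,d\beta_C(c)=\Big|\int_C u(cz)\,d\beta_C(c)\Big|^2+\mathrm{Var}_C(F)\le |M_Cu(z)|^2+\kappa'\int_B S(bz)^2\,d\beta_B(b),
\]
with $\kappa'$ as in Step 1.

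\textbf{Step 3: integration.} By Fubini and $G_1$-invariance of $m_Y$, substituting $z=c^{-1}y$ gives
\[
\int_Y w|u|^2\,dm_Y=\int_Y \mathbf 1_{X_0}(z)\int_C|u(cz)|^2\,d\beta_C(c)\,dm_Y(z).
\]
Insert the pointwise estimate from Step 2 and drop the indicator. The first term becomes $\|M_Cu\|_2^2$. For the second term, invariance gives $\int_Y\int_B S(bz)^2\,d\beta_B\,dm_Y=\|S\|_2^2=D_Q(u)^2$. Adjusting $\kappa$ accordingly proves \eqref{eq:local-transfer}.

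\textbf{Properties of $w$.} Finally, $0\le w\le1$ is immediate, since $w$ is an average of an indicator. Also $\int_X w\,d\mu=\int_C\mu(cX_0)\,d\beta_C=\mu(X_0)$, by Fubini and $\mu$-invariance.
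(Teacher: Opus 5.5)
Your proposal is correct and follows essentially the same route as the paper. You uniformize the local-gap data (gap constant, and $G_2$-components of the lifts lying in a compact $Q$) on a set $X_0$ with $\mu(X_0)>1/2$, apply the gap inequality to $F_z(b)=u(bz)$ at each $z\in X_0\cap Y$, convert the $G_1$-differences into $G_2$-differences via the stabilizer relation, and integrate over the orbit using invariance of $m_Y$ after adding back the squared mean $|M_Cu(z)|^2$. You also make explicit the comparison of the $C$-variance with the $B$-variance (factor $\beta_B(C)^{-1}$), which the paper leaves implicit; the second factor $\beta_B(C)^{-1}$ in your $\kappa$ is unnecessary, since $\int_Y\int_B S(bz)^2\,d\beta_B\,dm_Y=\|S\|_2^2$ exactly, but it is harmless.
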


\begin{proof}
Choose a compact neighborhood $B\supset C$ in $G_1$. Note that local spectral gap holds for any such $B$ if it holds for one
\cite[Proposition~2.3 and Remark~1.5]{BISG}. We use it in its right
translation form.
Now for almost every $y\in X$, local spectral gap gives a finite
set $E_y\subset G_y$ and an integer gap constant $\kappa(y)$ such that 
\[
 \int_B|F(b)-\beta_B(F)|^2\,d\beta_B(b)
 \leq\kappa(y)\int_B\max_{\gamma\in E_y}
       |F(b\gamma_1)-F(b)|^2\,d\beta_B(b)
\]
for every $F\in L^2_{loc}(G_1)$ and where $\gamma_1$ denotes the $G_1$-component of $\gamma$. The data $\kappa(y)$ and $E_y$ may be chosen
measurably in $y$, for instance by enumerating the discrete stabilizers and testing the
inequality on a countable dense subset of $L^2_{loc}(G_1)$.
Using a compact exhaustion of $G_2$, choose a measurable set
$X_0\subset X$ with $\mu(X_0)>1/2$ such that $\kappa(y)$,
$|E_y|$, and the $G_2$-components of elements from $E_y$
are all bounded bounded. Thus there are a compact $Q\subset G_2$ and $\kappa>0$
such that, for every $y\in X_0$, we have $p_2(E_y)\subset Q$ and
\begin{equation}\label{eq:uniform-local-gap}
 \int_C|F(c) - \beta_C(F) |^2d\beta_C(c)\leq\kappa\int_B
       \max_{\gamma\in E_y}|F(b\gamma_1)-F(b)|^2\,d\beta_B(b) 
\end{equation}
for every $F \in L^2_{loc}(G_1)$.

\smallbreak

Apply
\eqref{eq:uniform-local-gap} to $F_y(b):=u(by)$ for $y\in X_0\cap Y$.
For $\gamma=(\gamma_1,\gamma_2)\in G_y$,
\begin{equation}\label{eq:stabilizer-identity}
 F_y(b\gamma_1)-F_y(b) = u(b\gamma_1y)-u(by)
   =u(\gamma_2^{-1}by)-u(by)
\end{equation}
since $b\in G_1$ commutes with $\gamma_2\in G_2$. Note that the orbit functions $F_y$ are locally square integrable for almost every
$y$ by Fubini. Integrating the variance inequality \eqref{eq:uniform-local-gap} over $X_0\cap Y$ gives 

\begin{align}
\int_{X_0 \cap Y}\int_C|F_y(c) - \beta_C( F_y ) |^2d\beta_C(c)dm_Y(y) &\leq  \int_{X_0 \cap Y}\kappa\int_B
       \max_{\gamma\in E_y}|F_y(b\gamma_1)-F_y(b)|^2\,d\beta_B(b) dm_Y(y)
\\
& = \int_B\kappa\int_{X_0 \cap Y} \max_{\gamma \in E_y} |u(\gamma_2^{-1}by) - u(by)|^2dm_Y(y) d\beta_B(b) \\
& \leq \kappa D_Q(u)^2. \label{Control by D_Q}
\end{align}

Now, by definition of $w$,
\[
 \int_Yw(y)|u(y)|^2\,dm_Y(y)
 =\int_{X_0\cap Y}\int_C|u(cy)|^2\,d\beta_C(c)\,dm_Y(y).
\]

Since $\beta_C(F_y)=(M_Cu)(y)$, adding back the squared mean gives 

\[
 \int_Y w(y)|u(y)|^2\,dm_Y(y)
 \leq\|M_Cu\|_2^2+\kappa D_Q(u)^2.
\]

The assertion about $\int_Xw\,d\mu$ is a direct consequence of invariance of $\mu$.
\end{proof}

\begin{remark}
The weight $w$ is the indicator of $X_0$ averaged over $C$. Thus, we can
control the $L^2$ mass of $u$ seen by this weight through the $L^2$ mass
of its compact average $M_Cu$ and the defect $D_Q(u)$.

When $u$ is almost invariant in the $G_2$-direction, the defect $D_Q(u)$
should be small after averaging along $G_2$. So, if a definite proportion
of the mass of $u$ is seen by $w$, its compact average $M_Cu$ must also
carry a definite amount of mass. We now put this intuition to work.
\end{remark}

The right side of \eqref{eq:local-transfer} is the mass seen from
$X_0$ after averaging over $C$. To use the estimate for an arbitrary
vector, we return a fixed proportion of its mass to this weight.
\smallbreak

\begin{lemma}[Mass on a generic orbit]\label{lem:local-input}
Fix $C$, $X_0$, $Q$, $\kappa$, and $w$ as in
Lemma~\ref{lem:local-transfer}. For $\mu$-almost every $x$, the orbit
$Y:=Gx$ has the following property for every $u\in L^2(Y)$:
\begin{equation}\label{eq:mass-return}
 \frac1T\int_0^T\int_Yw(y)|\pi(a_t)u(y)|^2\,dm_Y(y)\,dt
 \longrightarrow\mu(X_0)\|u\|_2^2.
\end{equation}
Consequently, if $u\in C(Y)\cap L^2(Y)$ and $D_Q(u)<\infty$, there are arbitrarily large
$t\geq0$ for which
\begin{equation}\label{eq:local-lower-bound}
 \|M_C\pi(a_t)u\|_2^2
 \geq\frac12\|u\|_2^2-\kappa D_Q(u)^2.
\end{equation}
The orbit can be chosen before $u$.

\end{lemma}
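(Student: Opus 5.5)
The plan is to rewrite the left-hand side of \eqref{eq:mass-return} as an ergodic average of the function $w$ along the flow $a_t$, evaluated at points of the orbit. Then I will apply the pointwise ergodic theorem on $(X,\mu)$ and integrate over the orbit. Since $m_Y$ is $G$-invariant, the substitution $y\mapsto a_ty$ gives
\[
 \int_Yw(y)|\pi(a_t)u(y)|^2\,dm_Y(y)=\int_Y w(a_ty)|u(y)|^2\,dm_Y(y).
\]
Therefore the time average equals $\int_Y \bigl(\frac1T\int_0^Tw(a_ty)\,dt\bigr)|u(y)|^2\,dm_Y(y)$.

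The $a_t$-flow on $(X,\mu)$ is ergodic by Howe--Moore, and $0\leq w\leq 1$ is bounded measurable. By Birkhoff, the set $X_1\subset X$ where $\frac1T\int_0^Tw(a_ty)\,dt\to\int_Xw\,d\mu=\mu(X_0)$ has full measure. I would replace $X_1$ by the $G$-invariant subset $X_2$ of points $x$ with $gx\in X_1$ for $m_G$-almost every $g$. By Fubini and invariance of $\mu$, $X_2$ has full measure. For $x\in X_2$ the orbit $Y=Gx$ satisfies $m_Y(Y\setminus X_1)=0$, since $m_Y$ is the pushforward of local Haar measure under $g\mapsto gx$. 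This is the step needing care. One must check that null sets in $G$ push to $m_Y$-null sets, which holds because $G\to G/G_x$ is a local homeomorphism with Haar-compatible measures when $G_x$ is discrete. One must also choose a version of $w$ defined everywhere. I would fix the Borel version given by the integral formula for $w$, which is then defined at every point of every orbit.

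Having fixed such an orbit, independently of $u$, the integrand converges pointwise $m_Y$-a.e. to $\mu(X_0)|u(y)|^2$. It is dominated by $|u|^2\in L^1(Y)$, since the time averages lie in $[0,1]$. Dominated convergence yields \eqref{eq:mass-return} for every $u\in L^2(Y)$ simultaneously.

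For the consequence, suppose $u\in C(Y)\cap L^2(Y)$ with $D_Q(u)<\infty$. Note that $\pi(a_t)u$ is again continuous and square integrable. Since $a_t\in G_1$ commutes with $G_2$, $D_Q(\pi(a_t)u)=D_Q(u)$ by invariance of $m_Y$. Lemma~\ref{lem:local-transfer} applied to $\pi(a_t)u$ then gives
\[
\|M_C\pi(a_t)u\|_2^2\geq \int_Y w|\pi(a_t)u|^2\,dm_Y-\kappa D_Q(u)^2 .
\]
The Cesàro means of the first term on the right converge to $\mu(X_0)\|u\|_2^2$. If $u=0$ there is nothing to prove. Otherwise $\mu(X_0)>1/2$, so $\mu(X_0)\|u\|_2^2>\frac12\|u\|_2^2$. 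A function whose Cesàro averages eventually exceed $\frac12\|u\|_2^2$ must itself exceed that value at arbitrarily large times. Hence the weighted mass is at least $\frac12\|u\|_2^2$ for arbitrarily large $t$, which gives \eqref{eq:local-lower-bound}.

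I expect the main obstacle to be the measure-theoretic passage from a $\mu$-full-measure set to an $m_Y$-conull subset of a single orbit, since $\mu$ and $m_Y$ are mutually singular in general. The $G$-invariance and Fubini trick above is how I would handle it.
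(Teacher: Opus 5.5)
Your proposal is correct and follows essentially the same route as the paper: you change variables to write the weighted mass as an orbit integral of Birkhoff averages of $w$, use a Fubini/invariance argument to transfer the $\mu$-conull convergence set to an $m_Y$-conull subset of almost every orbit, apply dominated convergence, and then apply Lemma~\ref{lem:local-transfer} to $\pi(a_t)u$ using $D_Q(\pi(a_t)u)=D_Q(u)$ and $\mu(X_0)>1/2$. Your $G$-invariant full-measure set $X_2$ is only a repackaging of the paper's compact-exhaustion Fubini step, and your remarks on pushing Haar-null sets to $m_Y$-null sets and on fixing the everywhere-defined version of $w$ make explicit what the paper leaves implicit.
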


\begin{proof}
The flow $a_t$ is ergodic on $(X,\mu)$. By the pointwise ergodic
theorem,
\begin{equation}\label{eq:generic-flow}
 \frac1T\int_0^T w(a_ty)\,dt\longrightarrow\mu(X_0)
\end{equation}
for $\mu$-almost every $y$. Let $N$ be the exceptional null set.
For every compact $D\subset G$, invariance of $\mu$ gives
\[
 \int_X\int_D\mathbf1_N(gx)\,dg\,d\mu(x)
 =m_G(D)\mu(N)=0.
\]
Fubini's theorem and a countable compact exhaustion of $G$ show that,
for $\mu$-almost every $x$, equation~\eqref{eq:generic-flow} holds at
$m_{Gx}$-almost every point of $Gx$. Fix one such orbit $Y=Gx$.
This choice is independent of $u$.
\smallbreak

For every $u\in L^2(Y)$, changing variables gives
\[
 \int_Yw(y)|\pi(a_t)u(y)|^2\,dm_Y(y)
 =\int_Yw(a_ty)|u(y)|^2\,dm_Y(y).
\]
Equation~\eqref{eq:generic-flow} and dominated convergence now prove
\eqref{eq:mass-return}.
For the consequence, assume that $u$ is continuous and $D_Q(u)<\infty$.
If $u\neq0$, this limit is strictly greater than
$\frac12\|u\|_2^2$. Hence the inner integral is at least
$\frac12\|u\|_2^2$ for arbitrarily large $t$.
The commuting factors give
\[
 D_Q(\pi(a_t)u)=D_Q(u).
\]
Apply \eqref{eq:local-transfer} at these times to obtain
\eqref{eq:local-lower-bound}. The case $u=0$ is immediate.
\end{proof}

We now compare this return of mass with decay of matrix coefficients
on the orbit.
\smallbreak

\begin{corollary}[Displacement on a generic orbit]\label{cor:displacement}
Assume also that $p_2(G_x)$ is dense in $G_2$ almost surely. For
$\mu$-almost every $x$, put $\Gamma:=G_x$ and $Y:=G/\Gamma$.
With $Q$ and $\kappa$ as in Lemma~\ref{lem:local-input},
\begin{equation}\label{eq:displacement-gap}
 D_Q(u)\geq\frac{1}{\sqrt{2\kappa}}\|u\|_2
 \qquad\text{for every }u\in C(Y)\cap\bigl(L^2(Y)\ominus L^2(Y)^{G_1}\bigr).
\end{equation}
If $\Gamma$ has infinite covolume, this holds for every $u\in C(Y)\cap L^2(Y)$.
If $\Gamma$ is a lattice, it holds for every $u\in C(Y)\cap L^2_0(Y)$.
\end{corollary}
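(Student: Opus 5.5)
Fix a generic $x$ as in Lemma~\ref{lem:local-input}, so that the estimate \eqref{eq:local-lower-bound} holds on $Y=G/\Gamma$ for every continuous $u$ with $D_Q(u)<\infty$. The plan is to combine that lower bound with Howe--Moore decay of the compact averages $M_C\pi(a_t)u$ for $u$ orthogonal to the $G_1$-invariant vectors. We may assume $D_Q(u)<\infty$; otherwise the inequality is trivial. Applying Lemma~\ref{lem:local-input} gives arbitrarily large $t$ with
\[
 \|M_C\pi(a_t)u\|_2^2\geq\tfrac12\|u\|_2^2-\kappa D_Q(u)^2 .
\]
The main step is therefore to show that $\|M_C\pi(a_t)u\|_2\to0$ as $t\to\infty$ for $u\perp L^2(Y)^{G_1}$. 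Once we have it, letting $t\to\infty$ along these times yields $\kappa D_Q(u)^2\geq\frac12\|u\|_2^2$, which is \eqref{eq:displacement-gap}.

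For the decay, I would write
\[
 \|M_C\pi(a_t)u\|_2^2=\int_C\int_C\langle\pi(c'^{-1}c\,a_t)u,\pi(a_t)u\rangle\,d\beta_C(c)\,d\beta_C(c'),
\]
more precisely $\langle \pi(a_t^{-1}c'^{-1}ca_t)u,u\rangle$. This is not directly a matrix coefficient tending to infinity, since $a_t^{-1}Ka_t$ need not escape. A cleaner route is $\|M_C v\|^2=\langle M_C^*M_C v,v\rangle$ with $v=\pi(a_t)u$, where $M_C^*M_C=\pi(\nu)$ for the absolutely continuous probability measure $\nu=\check\beta_C*\beta_C$ on $G_1$. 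The restriction of $\pi|_{G_1}$ to $\mathcal H:=L^2(Y)\ominus L^2(Y)^{G_1}$ has no invariant vectors, so by Howe--Moore (as $G_1$ is simple with finite center) its matrix coefficients vanish at infinity. This alone does not make $\langle\pi(\nu)\pi(a_t)u,\pi(a_t)u\rangle$ small, since it equals $\langle \pi(a_t^{-1}\nu a_t)u,u\rangle$, and conjugates of $\nu$ by $a_t$ spread along the unstable horospherical directions. The key observation is that $a_t^{-1}\nu a_t$ smeared over a neighborhood becomes equidistributed, in the sense that for $g$ ranging in the expanding horospherical part $g\to\infty$ except on a set of vanishing $\nu$-mass. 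Thus $\langle\pi(a_t^{-1}ga_t)u,u\rangle\to0$ for $\nu$-almost every $g$ off the subgroup $P^-$ stabilizing $a_t$-contraction (a null set for $\nu$, since $\nu$ is absolutely continuous and $a$ is regular). Dominated convergence then gives the decay. This is the step I expect to be the main obstacle: one must check that for Haar-a.e.\ $g\in G_1$, $a_t^{-1}ga_t\to\infty$. This holds because the set where it stays bounded is contained in the parabolic $P=\{g: a_t^{-1}ga_t\text{ bounded as }t\to\infty\}$, a proper closed subgroup of measure zero.

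For the two final assertions, I would identify $L^2(Y)^{G_1}$ using the density of $p_2(\Gamma)$. A $G_1$-invariant function on $G/\Gamma$ is a function on $G_1\backslash G/\Gamma\cong G_2/p_2(\Gamma)$. Since $p_2(\Gamma)$ is dense, measurable $G_1$-invariant functions are a.e.\ constant: this is Mautner/ergodicity of a dense subgroup acting on $G_2$, i.e.\ a $p_2(\Gamma)$-invariant $L^2_{loc}$ function on $G_2$ is invariant under its closure, hence constant. So $L^2(Y)^{G_1}$ consists of the constants in $L^2(Y)$. These are zero if the covolume is infinite, which gives the statement for all of $C(Y)\cap L^2(Y)$. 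In the lattice case they are the constants, so the orthocomplement is $L^2_0(Y)$.
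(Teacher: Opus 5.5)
Your proposal is correct and follows essentially the same route as the paper: the return-of-mass bound from Lemma~\ref{lem:local-input} is combined with Howe--Moore decay of $\|M_C\pi(a_t)u\|_2$, obtained by dominated convergence over the absolutely continuous measure $\beta_C*\check\beta_C$ since $a_t^{-1}ba_t\to\infty$ off a null parabolic subgroup, and $L^2(Y)^{G_1}$ is identified with the constants using density of $p_2(\Gamma)$ and continuity of translations in $L^2_{loc}(G_2)$. Your detour through $M_C^*M_C$ and ``equidistribution'' of the conjugated measure is unnecessary: pointwise escape for almost every $b$ plus dominated convergence already suffices, and that is exactly what the paper uses.
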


\begin{proof}
A $G_1$-invariant function on $G/\Gamma$ lifts to a function $v$ on
$G_2$, with $v(\ell\gamma_2)=v(\ell)$ for every $\gamma\in\Gamma$.
Density of $p_2(\Gamma)$ and continuity of translations in
$L^2_{loc}(G_2)$ imply that $v$ is constant. Thus $L^2(Y)^{G_1}$ is zero
when $\Gamma$ has infinite covolume and consists of the constants otherwise.
\smallbreak

Now we claim that Howe--Moore implies,
\begin{equation}\label{eq:averaging-decay}
 \|M_C\pi(a_t)u\|_2\longrightarrow0.
\end{equation}
Indeed, writing $\check\beta_C$ for the image of $\beta_C$ under
the inverse map,
\[
 \|M_C\pi(a_t)u\|_2^2
 =\int_{G_1}\langle\pi(a_t^{-1}ba_t)u,u\rangle\,
                         d(\beta_C*\check\beta_C)(b).
\]
The conjugates $a_t^{-1}ba_t$ tend to infinity outside a proper
parabolic subgroup, which has Haar measure zero. Since $\beta_C*\check\beta_C$ is absolutely continuous, Howe--Moore's matrix coefficient decay gives $\langle\pi(a_t^{-1}ba_t)u,u\rangle \underset{t \rightarrow\infty}{ \longrightarrow } 0$ for $\beta_C*\check\beta_C$-almost all $b$. Then dominated convergence yields \eqref{eq:averaging-decay}.
\smallbreak

Let $t\to\infty$ along the times supplied by
Lemma~\ref{lem:local-input}. Equation~\eqref{eq:local-lower-bound}
gives
\[
 \kappa D_Q(u)^2\geq\frac12\|u\|_2^2,
\]
which is \eqref{eq:displacement-gap}.
\end{proof}

\subsection{Co-amenability and concluding the proof} To conclude, we use \cite[Theorem~1]{HT}, whose product
formulation only asks that $G_1$ and $G_2$ act ergodically,
even when $G_2$ has several simple factors. It supplies the
almost invariant vectors needed for the final contradiction.

\begin{proof}[Proof of Theorem \ref{Theorem: A local spectral gap criterion}.]
Fix a compact neighborhood $C\subset G_1$ and choose an orbit
$Y=G/\Gamma$ to which Lemma~\ref{lem:local-input} applies.
By a result of Hartman--Tamuz \cite[Theorem~1]{HT}, for almost every
such orbit $\Gamma$ is co-amenable in a product $H_1 \times H_2$ of closed normal
subgroups of $H_1 \subset G_1$ and $H_2 \subset G_2$. Since $H_1$ contains the closure of the projection of $\Gamma$ to $H_1$, and $\Gamma$ projects densely to $G_1$, $H_1 = G_1$. Similarly, $H_2 = G_2$. So $H_1 \times H_2 = G_1 \times G_2$ and $\Gamma$ is co-amenable in $G_1 \times G_2$. In other words, there are almost $G$-invariant unit
vectors $f_n\in L^2(Y)$. By density, we may take $f_n\in C_c(Y)$.
\smallbreak

Suppose $\Gamma$ has infinite covolume. Choose a nonnegative
$\psi\in C_c(G_2)$ of integral one and set
$u_n:=\psi *f_n\in C_c(Y)$. Then $\|u_n\|_2\to1$. Let
$Q$ be the compact subset from Lemma~\ref{lem:local-input}, and put
$R:=Q\supp(\psi)\cup\supp(\psi)$. For $\ell\in Q$ and $\bar g\in Y$,
\begin{align*}
    (\pi(\ell)u_n-u_n)(\bar g)
 & =\int_{G_2}\psi(h)
       [f_n(h^{-1}\ell^{-1}\bar g)-f_n(h^{-1}\bar g)]\,dh \\
       & = \int_{\supp(\psi)}\psi(h)
       [f_n(h^{-1}\ell^{-1}\bar g)-f_n(h^{-1}\bar g)]\,dh \\ 
       & = \int_{\supp(\psi)}\psi(h)
       [f_n(h^{-1}\ell^{-1}\bar g)-f_n(\bar g)]\,dh - \int_{\supp(\psi)}\psi(h)
       [f_n(h^{-1}\bar g)-f_n(\bar g)]\,dh\\ 
       & = \int_{R}[\psi(l^{-1}h) - \psi(h)]
       [f_n(h^{-1}\bar g)-f_n(\bar g)]\,dh
\end{align*}
where the last line is obtained by a a change of variable $lh \rightarrow h$ in the left-hand integral.
Hence
\[
 \sup_{\ell\in Q}|\pi(\ell)u_n(\bar g)-u_n(\bar g)|
 \leq2\|\psi\|_\infty\int_R|f_n(h^{-1}\bar g)-f_n(\bar g)|\,dh.
\]
Taking the $L^2$-norm and using Minkowski's  inequality gives
\begin{equation}\label{eq:smoothed-errors}
 D_Q(u_n)\leq2\|\psi\|_\infty 
          \int_{R}\|\pi(h)f_n-f_n\|_2\,dh\longrightarrow0.
\end{equation}
\smallbreak

Corollary~\ref{cor:displacement} now gives
\[
 D_Q(u_n)\geq\frac{1}{\sqrt{2\kappa}}\|u_n\|_2.
\]
This contradicts \eqref{eq:smoothed-errors}, since $\|u_n\|_2\to1$.
\end{proof}

\section{Proofs of the main theorems}\label{sec:corollaries}

We deduce here the two main theorems from the local spectral gap criterion.

\subsection{The lattice case}
The following is a direct consequence of
\cite[Corollary~6.10]{BGL} after passing to the quotient by the finite
center.

\begin{lemma}[Irreducibility of the induced action]\label{lem:induction-irreducibility}
Let $G$ be a connected semisimple real Lie group with finite center,
no compact factors, and at least two simple factors. Let $\Gamma<G$
be an irreducible lattice, and let $\Gamma\curvearrowright(X,\mu)$
be an ergodic probability preserving action on a standard probability
space. Then either $\Gamma_x\subseteq Z(G)$ for almost every $x$, or
the induced action $G\curvearrowright G\times_\Gamma X$ is
irreducible.
\end{lemma}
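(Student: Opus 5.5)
The plan is to reduce to the center-free case and then verify irreducibility factor by factor with a Mackey-range and ergodicity argument, using \cite[Corollary~6.10]{BGL} as the main engine. First, pass to $\bar G:=G/Z(G)$ and $\bar\Gamma:=\Gamma Z(G)/Z(G)$, which is still an irreducible lattice since $Z(G)$ is finite. The induced space $G\times_\Gamma X$ maps onto $\bar G\times_{\bar\Gamma}(X/\!\sim)$ with finite fibres, where $X/\!\sim$ is the quotient by the finite group $(\Gamma\cap Z(G))$. This group acts by measure preserving maps commuting with $\Gamma$, so ergodicity of a normal subgroup of $G$ on the induced space is equivalent to ergodicity on the quotient up to finite extensions. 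I would treat this finite extension carefully: $Z(G)\cap\Gamma$ acts on each ergodic component, and a finite-index argument (the $G_i$ are connected, so they cannot permute finitely many ergodic components nontrivially) shows that ergodicity of each connected factor $G_i$ lifts.

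Second, in the center-free setting, I would invoke \cite[Corollary~6.10]{BGL}. It asserts that for an ergodic p.m.p.\ action of an irreducible lattice $\Gamma$ in a center-free semisimple group with at least two factors, either the action is essentially free on a conull set of stabilizer-trivial points, or the induced action is irreducible. A dichotomy in terms of stabilizers ``trivial a.e.'' in $\bar\Gamma$ translates back to $\Gamma_x\subseteq Z(G)$ a.e. in $\Gamma$, since the preimage of the trivial group is $\Gamma\cap Z(G)$.

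As a sanity check on what the corollary gives, irreducibility means each simple factor $G_i$ acts ergodically on $G\times_\Gamma X$. By Mackey, $G_i$-invariant functions correspond to $\Gamma$-invariant functions on $G/G_i\times X$, i.e.\ on $G'\times X$ with $G'=\prod_{j\ne i}G_j$ and $\Gamma$ acting diagonally through $p'(\Gamma)$, which is dense by irreducibility. If $p'(\Gamma)$ were not ergodic on $G'\times X$, one would get a nonconstant invariant function. Standard arguments (e.g.\ Moore ergodicity and the Margulis normal subgroup theorem applied to the kernel of the action) force the $\Gamma$-action to factor through a finite quotient or to be essentially free modulo center.

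The main obstacle I expect is the bookkeeping of the finite center. Specifically, one must check that ``essentially free'' for $\bar\Gamma$ on $X/\!\sim$ matches ``$\Gamma_x\subseteq Z(G)$'', and that ergodicity of connected factors survives the finite extension. Both should follow since $Z(G)\cap\Gamma$ is finite and central, and connected groups act trivially on finite sets of ergodic components.
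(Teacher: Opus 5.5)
There is a genuine gap. Your outline has the same skeleton as the paper's proof: reduce to trivial center, apply \cite[Corollary~6.10]{BGL}, then lift back through $Z(G)$. However, you misstate the content of the cited corollary, and the reduction to its hypotheses, which is the actual work of the lemma, is missing.

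In the paper, \cite[Corollary~6.10]{BGL} is not a dichotomy for lattice actions. It applies to a nontrivial, discrete, ergodic invariant random subgroup of $G$ whose members meet every proper semisimple factor trivially, and its last assertion gives irreducibility of the $G$-action. To use it you must take the stabilizer distribution of $Z=G\times_\Gamma X$ and check four things:
\begin{enumerate}
\item $G_{[g,x]}=g\Gamma_xg^{-1}$ is discrete.
\item $Z$ is $G$-ergodic.
\item If the action is not essentially free, ergodicity of $\Gamma\curvearrowright X$ makes $\Gamma_x\neq\{e\}$ almost surely.
\item Crucially, $G_{[g,x]}\cap N=g(\Gamma_x\cap N)g^{-1}=\{e\}$ for every proper factor $N$.
\end{enumerate}
The last point is the only place where irreducibility of $\Gamma$ enters. $\Gamma\cap N$ is a discrete subgroup of $N$ normalized by the dense subgroup $p_N(\Gamma)$, hence normal in $N$, hence central, hence trivial. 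Your proposal never makes this reduction or uses irreducibility of $\Gamma$ in this way.

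Your ``sanity check'' paragraph does not fill the gap. Deducing finite-index or central stabilizers from Moore ergodicity and the normal subgroup theorem applied to the kernel is essentially the Stuck--Zimmer conclusion for $\Gamma$, which this lemma is a step towards. A faithful action has trivial kernel, and that says nothing about stabilizers.

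The finite-center step is also misargued. Each $G_i$ acts trivially on the space of its own ergodic components, so ``the $G_i$ cannot permute them'' is vacuous. Moreover, the finite group that permutes the $G_i$-ergodic components of $Z$ is $Z(G)$, not $Z(G)\cap\Gamma$, since the fibres of $Z\to\overline Z:=Z/Z(G)$ are $Z(G)$-orbits. The argument that works, and is the paper's, runs as follows:
\begin{enumerate}
\item Let $A\subset Z$ be $S$-invariant for a simple factor $S$. Then $\bigcup_{z\in Z(G)}zA$ is $S$- and $Z(G)$-invariant.
\item It therefore descends to a set in $\overline Z$ invariant under the image of $S$ in $\overline G$. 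That image acts ergodically on $\overline Z$ by the center-free case, so the union is null or conull.
\item Hence every non-null $S$-invariant set has measure at least $1/|Z(G)|$, and the $S$-invariant $\sigma$-algebra has at most $|Z(G)|$ atoms.
\item Since $S\lhd G$, the connected group $G$ permutes these atoms continuously, so it fixes each one.
\item Ergodicity of $G$ on $Z$, which you never invoke, then leaves a single atom.
\end{enumerate}
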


\begin{proof}
Put $Z:=G\times_\Gamma X$. First suppose that $G$ has trivial
center and that $\Gamma\curvearrowright X$ is not essentially free.
The induced action is ergodic, and its stabilizers are
\[
 G_{[g,x]}=g\Gamma_xg^{-1}.
\]
They are discrete and, by ergodicity, nontrivial almost everywhere.
Irreducibility of $\Gamma$ gives $\Gamma\cap N=\{e\}$ for every
proper semisimple factor $N\lhd G$. Thus
\[
 G_{[g,x]}\cap N=g(\Gamma_x\cap N)g^{-1}=\{e\}.
\]
The stabilizer distribution is therefore a nontrivial discrete
ergodic invariant random subgroup satisfying the hypotheses of
\cite[Corollary~6.10]{BGL}. The last assertion of that corollary
shows that $G\curvearrowright Z$ is irreducible.
\smallbreak

For finite center, put $C:=\Gamma\cap Z(G)$,
$\overline G:=G/Z(G)$, $\overline\Gamma:=\Gamma/C$, and
$\overline X:=X/C$. The induced $\overline G$-space is naturally
$\overline Z:=Z/Z(G)$. If $\overline\Gamma\curvearrowright
\overline X$ is essentially free, then $\Gamma_x\subseteq C$
almost surely. Otherwise the case just proved shows that
$\overline G\curvearrowright\overline Z$ is irreducible.
For any simple factor $S$ of $G$, the space of $S$-ergodic
components of $Z$ has trivial quotient by the finite group $Z(G)$,
so it is finite. Since $G$ is connected and acts ergodically on $Z$,
this finite factor is a point. Thus every simple factor acts
ergodically on $Z$.
\end{proof}

We use the standard invariant-map argument for projections of
stabilizers. Compare the proof of \cite[Corollary~1.5]{HT}.

\begin{lemma}[Density of projected stabilizers]\label{lem:dense-projections}
Let $G$ be a product of at least two connected noncompact simple real
Lie groups with finite center. Let $G\curvearrowright(Z,\nu)$ be an
irreducible probability preserving action with discrete stabilizers.
Then either almost every stabilizer is central, or almost every
stabilizer projects densely to every nontrivial proper subproduct of
$G$.
\end{lemma}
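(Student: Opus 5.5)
Fix a nontrivial proper subproduct $G=G_I\times G_J$, where $I\sqcup J$ is a partition of the simple factors, and let $p_I$ be the projection to $G_I$. The plan is the standard invariant-map argument. Consider the measurable map
\[
 \Phi\colon Z\to\mathrm{Sub}(G_I),\qquad z\mapsto H_z:=\overline{p_I(G_z)},
\]
into the Chabauty space of closed subgroups of $G_I$. It is Borel and $G$-equivariant, where $G$ acts on $\mathrm{Sub}(G_I)$ through $p_I$ by conjugation. The key point is that $G_J$ acts trivially on the target. Also, for $h\in G_J$ we have $G_{hz}=hG_zh^{-1}$, and $p_I(hG_zh^{-1})=p_I(G_z)$. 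So $\Phi$ is $G_J$-invariant, and ergodicity of $G_J$ on $Z$ (which follows from irreducibility, since $G_J$ contains a simple factor) shows that $\Phi$ is essentially constant, equal to some closed subgroup $H\le G_I$.

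Equivariance under $G_I$ then gives $gHg^{-1}=H$ for almost every choice, hence for all $g\in G_I$ by continuity of the conjugation action. So $H$ is a closed normal subgroup of $G_I$. Closed normal subgroups of a connected semisimple group with finite center and no compact factors are of the form (product of some simple factors)$\cdot$(subgroup of the center). Indeed, the identity component $H^\circ$ is a normal connected subgroup, hence a product of simple factors $G_K$ with $K\subseteq I$. The image of $H$ in $G_I/G_K$ is then a discrete normal subgroup of a connected group, hence central.

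Next I would show $K=I$ whenever stabilizers are not central. Suppose $K\subsetneq I$, and let $S$ be a simple factor in $I\setminus K$. Then the projection of $G_z$ to $S$ lies in the center $Z(S)$, a finite group, almost surely. Now repeat the argument with the single factor $S$ in place of $G_I$: the map $z\mapsto p_S(G_z)$ is invariant under the complementary factor, hence constant equal to a finite central subgroup $F$. Then $G_z\subseteq p_S^{-1}(F)$ for almost every $z$.

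The main obstacle is ruling out this degenerate case, in which the stabilizers live, up to finite index, in the complementary factor $G^{(S)}:=\prod_{T\neq S}T$. My plan is to use ergodicity of $S$. The subgroup $G'_z:=G_z\cap G^{(S)}$ has finite index in $G_z$, and $S$ commutes with $G^{(S)}$. So for $s\in S$ we get $G'_{sz}=sG'_zs^{-1}=G'_z$, meaning $z\mapsto G'_z$ is $S$-invariant and therefore essentially constant, equal to some $\Lambda$. Equivariance under the rest of $G$ makes $\Lambda$ normal in $G$. Since $\Lambda$ is discrete and $G$ is connected, $\Lambda$ is central. Hence $G_z$ is a finite extension of a central subgroup, contained in $p_S^{-1}(F)\cdot Z(G)$.

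This is not quite central yet, because the $S$-component could still be a nontrivial element of $Z(S)$. But every element of $G_z$ then has central image in $G/Z(G)$, which suffices. Since the paper allows passing to the quotient by the finite center, I would phrase "central" modulo $Z(G)$. Concretely, I apply the argument to $G/Z(G)$, whose simple factors are center-free, so that $F$ is trivial there. Thus either stabilizers are central, or for every partition $K=I$ holds, which means $H=G_I\cdot(\text{central})$. In the latter case a closed subgroup containing $G_I^\circ=G_I$ is all of $G_I$, so $p_I(G_z)$ is dense.

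Doing this for every nontrivial proper subproduct, which amounts to countably (indeed finitely) many partitions, and intersecting the full-measure sets gives the lemma.
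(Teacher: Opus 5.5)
Your proof is correct and is essentially the paper's invariant-map argument. The steps match: $G_J$-invariance of $z\mapsto\overline{p_I(G_z)}$, then normality of its essential value, then, in the degenerate case, ergodicity of a factor $S$ with $p_S(G_z)\subseteq Z(S)$. That ergodicity makes the stabilizer map essentially constant, equal to a discrete normal and therefore central subgroup.

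The one divergence is a detour you don't need. Since $p_S(G_z)\subseteq Z(S)$, conjugation by any $s\in S$ already fixes every element of $G_z$: its $S$-component is central in $S$, and its other components commute with $S$. So $G_{sz}=G_z$ outright, and the paper concludes at once. It needs neither $G_z\cap G^{(S)}$, nor the finite group $F$, nor a reduction to $G/Z(G)$.

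In your version, the sentence claiming that every element of $G_z$ has central image in $G/Z(G)$ does not follow from what precedes it. At that point you have only shown that $G_z\cap G^{(S)}$ is central. What actually closes the case is your subsequent reduction to $G/Z(G)$. That reduction also requires two unstated steps:
\begin{itemize}
\item passing to the action on the quotient space $Z/Z(G)$;
\item lifting density back through the finite cover $G_I\to G_I/Z(G_I)$.
\end{itemize}
The direct observation avoids all of this.
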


\begin{proof}
Write $G=H\times K$, where $H$ is a nontrivial proper subproduct.
The map $z\mapsto\overline{p_H(G_z)}$ is $K$-invariant. By
irreducibility it is almost surely constant, and equivariance shows
that its value $N$ is a closed normal subgroup of $H$.
If $N\neq H$, its projection to some simple factor $S$ of $H$ is
central. Conjugation by $S$ therefore fixes $G_z$ almost surely.
Since $S$ acts ergodically, the stabilizer map itself is almost surely
constant. Its value is a discrete normal subgroup of the connected
group $G$, hence is central. Outside this case, $N=H$ for every
nontrivial proper subproduct $H$.
\end{proof}

\begin{proof}[Proof of Theorem \ref{Cor: Lattice case}.]
Put $C:=\Gamma\cap Z(G)$. The stabilizer of the image $\overline x$
in $X/C$ for the $\Gamma/C$-action is $\Gamma_xC/C$. Since $C$ is
finite, it suffices to prove the assertion after passing to $G/Z(G)$.
We may therefore assume that $G$ has trivial center.
If $G$ is simple, the conclusion is classical \cite{SZ}. We may now
assume that $G$ has at least two simple factors.
\smallbreak

    Let $(X,\mu)$ be the ergodic $\Gamma$ system. Let
$Z:=G\times_\Gamma X$ be the induced $G$ system, where
$\gamma\cdot(g,x):=(g\gamma^{-1},\gamma x)$. Equip it with the
induced probability measure $\nu$, obtained from normalized Haar
measure on $G/\Gamma$ and $\mu$ on the fibers
\cite[Section~3.1]{Creutz}.
\smallbreak

For $z=[g,x]$, the stabilizer is
\[
 G_z=g\Gamma_xg^{-1}\subseteq g\Gamma g^{-1}.
\]
By Margulis' arithmeticity theorem, $\Gamma$ is arithmetic.
Thus $G_z$ is contained in a conjugate of an arithmetic lattice, so its
adjoint matrices have algebraic entries in a suitable basis.
By Lemma \ref{lem:induction-irreducibility}, either $\Gamma_x$ is central
almost surely, in which case there is nothing to prove, or the induced
$G$-action is irreducible. In the latter case,
Lemma~\ref{lem:dense-projections} gives dense projections to every proper subproduct.
\cite[Theorem~A]{BISG} gives local spectral gap for the projected
stabilizers, and Theorem~\ref{Theorem: A local spectral gap criterion}
implies that $G_z$ is a lattice. Since $\Gamma_x\leq\Gamma$, this
means $[\Gamma:\Gamma_x]<\infty$.
\end{proof}

\subsection{The case of an
\texorpdfstring{$\SL_2(\mathbb R)$}{SL2(R)} factor}\label{sec:sl2}

In this case, discreteness in the product supplies the estimate for
escape from identity neighbourhoods usually obtained from algebraicity.
A projected element can be very close to the identity only if its
complementary component is large. For nontrivial words in a fixed finite
set of generators, this means that they stay outside identity
neighbourhoods whose radii shrink exponentially with word length.
The special feature of $\mathrm{PSL}_2(\mathbb R)$ is that its proper
connected subgroups satisfy a common group law. Together, these two
facts give the escape from subgroups required by
\cite[Theorem~6.7]{BISG}.
\smallbreak

\begin{proposition}[Local spectral gap for the rank-one projection]
\label{prop:sl2-projection}
Let $G_2$ be a nontrivial connected semisimple real Lie group with finite
center and no compact factors, and let
$G_1=\mathrm{SL}_2(\mathbb R)$ or $\mathrm{PSL}_2(\mathbb R)$.
If $\Gamma<G_1\times G_2$ is discrete and projects densely to both
factors, then $p_1(\Gamma)\curvearrowright G_1$ has local spectral gap.
\end{proposition}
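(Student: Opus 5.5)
We will verify the hypotheses of \cite[Theorem~6.7]{BISG} for a suitable finitely generated dense subgroup of $\Delta:=p_1(\Gamma)$. Passing to $\mathrm{PSL}_2(\mathbb R)$ is harmless, since local spectral gap is insensitive to finite central quotients. Local spectral gap for a subgroup implies it for $\Delta$, because enlarging $E$ only helps. So we first choose finitely many elements $\gamma^{(1)},\dots,\gamma^{(k)}\in\Gamma$ whose $G_1$-components generate a dense subgroup $\Delta_0$ of $G_1$. This is possible since $\Delta$ is dense and $G_1$ is a connected Lie group: two elements generating a non-discrete, non-solvable subgroup already suffice. Write $S$ for the symmetric generating set of $\Delta_0$ and $\Gamma_0\le\Gamma$ for the group generated by the lifts. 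The BISG criterion asks, roughly, for two things.

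\emph{Escape from identity neighbourhoods.} For some $c>0$ and all $n$, the $n$-step random walk $\mu_S^{*n}$ gives mass at most $e^{-cn}$ to the ball of radius $e^{-n/c}$ around $1$, apart from the trivial element. We obtain this from discreteness in the product, as in Lemma~\ref{lem:height-separation}. Let $w$ be a word of length $n$ with $w(\gamma)\neq e$ and $\|p_1(w(\gamma))-1\|<\delta$. Discreteness of $\Gamma$ gives a neighbourhood $U_1\times U_2$ of the identity meeting $\Gamma$ only in $e$. Hence $p_2(w(\gamma))\notin U_2$, while its norm is at most $L^n$, where $L$ bounds the generators. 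To turn this into a quantitative bound, we use a counting and volume argument in $G_1\times G_2$. The elements $w(\gamma)$ of length $\le n$ lie in a ball of radius $O(n)$ in $G$. Discreteness, together with a small-neighbourhood packing argument, shows that there are at most $e^{O(n)}$ lattice-like points in the set $\{\|g_1-1\|<\delta\}\times\{\|g_2\|\le L^n\}$. Moreover, these points are $\rho$-separated with $\rho$ fixed, so their $G_1$-components cannot all cluster below scale $e^{-Cn}$. Combining this with the Kesten-type decay of $\mu_S^{*n}$ on the free-ish part gives the required exponential escape at scale $e^{-Cn}$.

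\emph{Escape from proper connected subgroups.} Every proper connected closed subgroup of $\mathrm{PSL}_2(\mathbb R)$ is contained in a Borel subgroup or is compact of dimension one. In either case it is solvable of step at most $2$, so it satisfies the law $[[x,y],[z,t]]=1$. Suppose $\mu_S^{*n}$ gave mass $\ge e^{-\epsilon n}$ to an $e^{-Cn}$-neighbourhood of some proper subgroup $H$. Then by a pigeonhole argument many quadruples of such elements yield commutator words $[[w_1,w_2],[w_3,w_4]]$ of length $O(n)$ whose $G_1$-component lies within $e^{-C'n}$ of the identity. The previous step forces these words to be trivial in $\Gamma$, hence trivial in $\Delta_0$. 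A large positive-proportion family satisfying the metabelian law contradicts non-amenability of $\Delta_0$, which contains a free subgroup by Tits. More precisely, it contradicts the fact that words in a free subgroup satisfying a fixed law have exponentially small measure. This gives the escape from subgroups statement in the form required by \cite[Theorem~6.7]{BISG}.

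The main obstacle is the first step: getting a clean exponential-scale estimate from discreteness alone. The difficulty is that the $G_2$-components can grow exponentially, so closeness in $G_1$ is not immediately ruled out. We would first try a volume-packing argument: count $\Gamma$-points in $U_1(\delta)\times B_{G_2}(L^n)$ using the separation $\rho$, which bounds their number by $\mathrm{vol}(B_{G_2}(L^n))\delta^{\dim G_1}/\rho^{\dim G}$. The resulting polynomial-in-$\delta$ gain is then weighed against the random-walk mass $|S|^{-n}$ per word.
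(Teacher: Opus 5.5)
Your outline has the same three ingredients as the paper: escape from identity for words, escape from subgroups via a solvable law, then \cite[Theorem~6.7]{BISG}. However, the step you flag as the main obstacle does not go through as proposed, and that step is exactly where the paper's key idea lies.

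A packing argument with a fixed separation $\rho$ in $G_1\times G_2$ cannot see scales below $\rho$ in the $G_1$-direction. Points that are $\rho$-separated in $G$ may have $G_1$-components clustered arbitrarily close to $e$, as long as their $G_2$-components are spread out. Concretely, $\rho/2$-balls around points of $U_1(\delta)\times B_{G_2}(L^n)$ fill a region whose $G_1$-cross-section has size $\asymp(\delta+\rho)^{\dim G_1}$, not $\delta^{\dim G_1}$. So the $\delta$-gain in your count is unavailable when $\delta<\rho$. More fundamentally, an upper bound on the number of $\Gamma$-points can never exclude a single nontrivial word whose $G_1$-component is super-exponentially close to $e$. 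Since $\Gamma$ may have infinite covolume (which is what the criterion must rule out), there is no lower-bound counting to use either.

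Your second step also uses escape deterministically ("forces these words to be trivial"), so a random-walk mass bound would not suffice. What you need is $\|\operatorname{Ad}(w)-I\|\ge e^{-D_Sn}$ for every non-central word of length $\le n$. The paper gets this algebraically by a Zassenhaus-type conjugation argument (Lemma~\ref{lem:height-separation}):
\begin{itemize}
\item Pigeonholing among reduced words in a free pair of $p_2(\Gamma)$ near the identity gives $\xi\in\Gamma$ with $d_{G_2}(\xi_2,e)\le s$ and $N_{G_1}(\xi_1)\le Cs^{-b}$.
\item Conjugating $\xi$ by a fixed finite set gives a mesh whose $G_1$-components have trivial common centralizer.
\item Suppose $\gamma_1$ is within $\varepsilon$ of $e$ and $R=N_{G_2}(\gamma_2)$, and take $s\approx a/R$. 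Then conjugation by $\gamma$ moves each mesh point by less than the discreteness radius in both factors, unless $\varepsilon\gtrsim R^{-b}$.
\item Otherwise $\gamma$ centralizes the mesh, so $\gamma_1=e$. This yields $\|\operatorname{Ad}(\gamma_1)-I\|\ge cN_{G_2}(\gamma_2)^{-B}$.
\end{itemize}

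The subgroup step needs repair too. The four-variable law $[[x,y],[z,t]]$ is vacuous on a pair: for a free pair $\{x,y\}$, every $[[a,b],[c,d]]$ with $a,b,c,d\in\{x,y\}$ is trivial. Moreover your $S$ need not be free, so ``a large family satisfying a law contradicts non-amenability'' is not a valid deduction. The paper instead takes $S$ to be a free pair and uses the nontrivial two-variable law $P(x,y)=[[x,y],x[x,y]x^{-1}]$. Deterministic escape then forces $P(x,y)=e$ in the free group for any two words of length $\le n$ within $e^{-Cn}$ of $H$. Hence they commute, and all such words lie in one cyclic subgroup (Lemma~\ref{lem:sl2-cyclic}).

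Finally, \cite[Theorem~6.7]{BISG} is not a hypothesis about one fixed walk $\mu_S$. It needs measures $\mu_\ell$ supported on $\Delta\cap B_{\epsilon_\ell}$ with $\epsilon_\ell\to0$, satisfying $\mu_\ell^{*2n}(H^{(\delta)})\le\delta^{d_1}$ for $n\approx d_2\log(1/\delta)/\log(1/\epsilon_\ell)$, with $d_1,d_2$ independent of $\ell$. The paper builds these from the BISG packing construction: free sets $T_\ell\subset W_{\le6\ell}(S)$ with $|T_\ell|\ge e^{b\ell}$, lying within $e^{-\beta\ell}$ of $I$. It then combines the free-group atom bound $(\sqrt2e^{-b\ell/2})^k$ with the fact that a cyclic subgroup meets the $k$-ball in at most $2k+1$ points.
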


The argument for escape from identity neighbourhoods works for any simple projected factor.
For a semisimple group $M$, fix a Euclidean norm on its Lie algebra
and put
\[
 N_M(g):=\max\{
 \|\operatorname{Ad}(g)\|_{\mathrm{op}},
 \|\operatorname{Ad}(g^{-1})\|_{\mathrm{op}}\}.
\]
We use the Hilbert--Schmidt norm for matrix differences. 

\begin{lemma}[Quantitative escape from identity neighbourhoods]
\label{lem:height-separation}
Let $G_2$ be as in Proposition~\ref{prop:sl2-projection}, let $G_1$ be a
connected noncompact simple real Lie group with finite center, and
let $\Gamma<G_1\times G_2$ be discrete with dense projections.
There are $c,B>0$ such that
\begin{equation}\label{eq:sl2-height}
 \forall \gamma\in\Gamma,\ \gamma_1\notin Z(G_1), \qquad \|\operatorname{Ad}(\gamma_1)-I\|
 \ge cN_{G_2}(\gamma_2)^{-B}.
\end{equation}
\end{lemma}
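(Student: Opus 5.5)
We argue via a Zassenhaus-type commutator contraction in $G_1$, using discreteness in $G_1\times G_2$ to forbid elements small in both factors. Suppose the inequality fails. Then there are $\gamma^{(n)}\in\Gamma$ with $\gamma^{(n)}_1\notin Z(G_1)$ and $\|\operatorname{Ad}(\gamma^{(n)}_1)-I\|\le \frac1n N_{G_2}(\gamma^{(n)}_2)^{-n}$. Note that $N_{G_2}(\gamma_2)\ge 1$ always, since $\operatorname{Ad}(\gamma_2)$ has determinant one. Fix once and for all a small identity neighbourhood $U_2\subset G_2$ and a small neighbourhood $U_1$ of $Z(G_1)$ in $G_1$ such that $(U_1\times U_2)\cap\Gamma$ is finite. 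This is possible by discreteness of $\Gamma$ together with the finiteness of $Z(G_1)$.

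The key step is the following. Fix a finite set $S\subset\Gamma$ whose $G_2$-projections generate a dense subgroup of $G_2$; this is available because $p_2(\Gamma)$ is dense and $G_2$ is connected. Given $\gamma$ with $\gamma_1$ very close to a central element, set $\varepsilon:=\|\operatorname{Ad}(\gamma_1)-I\|$ and $N:=N_{G_2}(\gamma_2)$. We form iterated commutators $\delta^{(0)}=\gamma$ and $\delta^{(k+1)}=[\delta^{(k)},s_k]$ or $[s_k,\delta^{(k)}]$, with $s_k\in\Gamma$ chosen so that:
\begin{itemize}
\item in $G_1$, the commutator estimate $\|\operatorname{Ad}([g,h])-I\|\lesssim\|\operatorname{Ad}(g)-I\|\,\|\operatorname{Ad}(h)-I\|$ keeps $\delta^{(k)}_1$ at distance $\lesssim\varepsilon\cdot C^k$ from the identity;
\item in $G_2$, the $G_2$-component is steered into $U_2$.
\end{itemize}

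It is cleaner to arrange the $G_2$-steering in a different way. Approximate $\gamma_2^{-1}$ by a word $w$ in $p_2(S)$ of length $L\lesssim \log N$. Such short words exist by using a Cartan decomposition together with density, since the distance to infinity of $\gamma_2$ is $\asymp\log N$. Then $w\gamma$ has $G_2$-component in $U_2$, while its $G_1$-component is $w_1\gamma_1$, which is not small.

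To keep the $G_1$-component small as well, we use conjugation instead: $\gamma'=w\gamma w^{-1}$ has $G_1$-component $w_1\gamma_1w_1^{-1}$, which is still within $\varepsilon e^{O(L)}=\varepsilon N^{O(1)}$ of a central element. Its $G_2$-component is $w_2\gamma_2w_2^{-1}$. So the plan is to choose a bounded-length word (a commutator) that shrinks the $G_2$-part while the $G_1$-part stays $\le \varepsilon N^{B_0}$.

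Concretely, take $\delta=[\gamma,h]$ with $h\in\Gamma$ such that $h_2$ is close to $\gamma_2^{-1}$ in an appropriate sense. Then $\delta_2\in\gamma_2h_2\gamma_2^{-1}h_2^{-1}$ lies in a bounded set. We then perform a bounded number of further Zassenhaus commutator steps with elements of bounded $G_2$-size (supplied by the dense subgroup) to push $\delta_2$ into $U_2$. These steps cost only constant factors in $G_1$. The elements $h$ are words of length $O(\log N)$, so $N_{G_1}(h_1)\le N^{O(1)}$. The resulting element lies in $U_1\times U_2$ and has $G_1$-part within $\varepsilon N^{B_0}$ of the centre.

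For $B>B_0$, the quantity $\varepsilon N^{B_0}$ tends to zero along the sequence. Since $(U_1\times U_2)\cap\Gamma$ is finite, the constructed elements eventually have $G_1$-component exactly central. We must then check that the construction can be arranged to preserve non-centrality, which contradicts $\gamma_1\notin Z(G_1)$. This is done by choosing, at each commutator step, among finitely many options with non-commuting projections, using that $p_1(\Gamma)$ is dense and $G_1$ is simple. The constant $c$ accounts for the finitely many elements with small $N$.

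\textbf{Main obstacle.} The hardest step is producing the $O(\log N)$-length words approximating a given $G_2$-element to bounded precision, with $G_1$-norm polynomial in $N$. Because we only need bounded, not small, precision, I would first reduce via the Cartan decomposition $KA^+K$ of $G_2$. Density of $p_2(\Gamma)$ handles the compact parts. For the $A^+$ part, I would use a fixed element of $\Gamma$ whose $G_2$-projection is loxodromic, adjusted by bounded elements. Each step costs a bounded factor in $N_{G_1}$.
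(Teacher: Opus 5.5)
There is a genuine gap at the $G_2$-steering step, and the whole argument rests on it. Suppose $h_2$ approximates $\gamma_2^{-1}$ only to bounded precision, say $h_2=\gamma_2^{-1}e_2$ with $e_2$ in a fixed compact set. Then
\[
 [\gamma,h]_2=\gamma_2h_2\gamma_2^{-1}h_2^{-1}=e_2\,(\gamma_2^{-1}e_2^{-1}\gamma_2).
\]
The conjugate $\gamma_2^{-1}e_2^{-1}\gamma_2$ has size comparable to a power of $N=N_{G_2}(\gamma_2)$ unless $e_2$ lies within $N^{-O(1)}$ of the identity, so $\delta_2$ is not in a bounded set.

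Even if $\delta_2$ were bounded, further commutators with elements of bounded $G_2$-size would not push it into $U_2$. A commutator is close to the identity only when one entry is already very close to the identity. Likewise, a commutator $[\gamma,h]$, which is what keeps the $G_1$-part near $Z(G_1)$, has small $G_2$-part only if $h_2$ nearly commutes with $\gamma_2$ to precision $N^{-O(1)}$. Your ``main obstacle'' (bounded-precision approximation by words of length $O(\log N)$) therefore aims at the wrong target.

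The non-centrality step is also left vague. ``Finitely many options with non-commuting projections'' has to become a family whose $G_1$-parts have common centralizer $Z(G_1)$, uniformly in the scale.

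The missing ingredient is the following. At every small scale $s$ you need elements $\xi\in\Gamma$ with
\[
 d_{G_2}(\xi_2,e)\le s,\qquad N_{G_1}(\xi_1)\le Cs^{-b},
\]
whose $G_1$-parts have common centralizer $Z(G_1)$. With $s=a/N$, no approximation of $\gamma_2^{-1}$ is needed at all. Indeed, $d(\gamma_2\xi_2\gamma_2^{-1},\xi_2)\le(N+1)s$ and $d(\gamma_1\xi_1\gamma_1^{-1},\xi_1)\le(Cs^{-b}+1)\varepsilon$. Both are below the discreteness radius once $\varepsilon\le cN^{-b}$, so $\gamma$ commutes with every $\xi$ and $\gamma_1$ is central.

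This is exactly the paper's route, and it produces the $\xi$ by pigeonhole:
\begin{itemize}
\item Take a free pair in $p_2(\Gamma)$ near the identity (Breuillard--Gelander).
\item The roughly $3^m$ reduced words of length $m$ have $G_2$-entries of size $e^{\eta m}$, so two of them are $e^{-\alpha m}$-close in $G_2$.
\item Their quotient has $G_1$-height at most $e^{\tau m}$, which gives the polynomial exponent $b=\tau/\alpha$.
\item Conjugating this pivot by a fixed finite set $T\subset\Gamma$ yields the common-centralizer condition.
\end{itemize}
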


\begin{proof}
Identify as we may each factor with its adjoint group. In particular, we view $G_1 \times G_2$ as a subgroup of the group of endomorphisms of its Lie algebra.
Equip each factor $M$ with the right-invariant Riemannian distance
$d_M$ induced by the chosen norm on its Lie algebra. Near the
identity, $d_M(g,e)$ is comparable to
$\|\operatorname{Ad}(g)-I\|$. It therefore suffices to prove
\eqref{eq:sl2-height} with $d_{G_1}(\gamma_1,e)$ on the left.
\smallbreak

We shall first construct a ``mesh'' of elements of $\Gamma$ at every small scale, with small $G_2$-components and controlled $G_1$-heights. We will then look at how elements of $\Gamma$ act on this mesh by conjugation. An element violating \eqref{eq:sl2-height} will move every point of the mesh by only a very small amount. Discreteness of $\Gamma$ will force it to fix the mesh pointwise, and the way we have chosen the mesh will then give a contradiction.

Let us build this mesh now. For every sufficiently small $s>0$, we will find a finite
set $\mathcal M_s\subset\Gamma$ of uniformly bounded cardinality
such that for all $\xi := (\xi_1,\xi_2)  \in \mathcal{M}_s$
\begin{equation}\label{eq:sl2-mesh}
 \begin{gathered}
 d_{G_2}(\xi_2,e)\le s,\qquad
 N_{G_1}(\xi_1)\le C s^{-b}
 \end{gathered}
\end{equation}
and 
\[\bigcap_{\xi\in\mathcal M_s}C_{G_1}(\xi_1)=\{e\}.\]
Here $b,C>0$ are fixed, and $C_{G_1}(h)$ denotes the
centralizer of $h$ in $G_1$. The last condition says that every
nontrivial conjugation in $G_1$ moves at least one point of the mesh.
\smallbreak

First we find one pivot at each scale. By
\cite[Theorem~1.1 and its proof in Section~5]{BG}, choose a free pair
$u_2,v_2\in p_2(\Gamma)$ as close to $I$ as needed, and choose lifts
$u,v\in\Gamma$. Suppose
$N_{G_2}(u_2),N_{G_2}(v_2)\le e^\eta$ and $N_{G_1}(u_1),N_{G_1}(v_1)\le e^\tau$ for some $\eta, \tau > 0$. The
$4\cdot3^{m-1}$ reduced words of length $m$ in the letters $u,v$ lie in a
the set of all matrices with entries at most $O(e^{\eta m})$. If the matrices have size $d \times d$,
partition this set into $d^2$-dimensional cubes of side
$c_0e^{-(\alpha+\eta)m}$. Choose $\alpha,\eta>0$ sufficiently small so that
\[
 d^2(\alpha+2\eta)<\log3.
\]
For all large $m$, two distinct words $w_{1,m},w_{2,m}$ in the letters $u,v$ lie in the same cube. Their quotient
gives $w_{1,m}w_{2,m}^{-1} =:\xi_m\in\Gamma$ with
\begin{equation}\label{eq:sl2-auxiliary}
 0<d_{G_2}(\xi_{m,2},e)\le e^{-\alpha m},
 \qquad N_{G_1}(\xi_{m,1})\le e^{\tau m},
\end{equation}
where $\tau>0$.  Since the projection of $\Gamma$ to $G_2$ is injective.
\smallbreak

We now spread each pivot by the same finite set of conjugations.
Choose a finite symmetric set $D\subset p_1(\Gamma)$ generating
a dense subgroup, again using \cite{BG} and let $h$ be any element in $G_1 \setminus \{e\}$ seen as an element of $\mathrm{End}_{\mathbb{R}}(\mathfrak{g}_1)$. If $q=\dim G_1$, the spaces
\[
 V_j(h):=\operatorname{span}_{\mathbb R}
 \{whw^{-1}:w\in W_{\le j}(D)\}
 \subset\operatorname{End}_{\mathbb R}(\mathfrak{g}_1)
\]
where $W_{\le j}(D)$ is the set of all words in at most $j$ letters of $D$, stabilize by $j=q^2$. Once two consecutive spaces agree, the space
is $D$-invariant and hence $G_1$-invariant. Thus an element commuting
with every displayed conjugate for $j=q^2$ commutes with every
$G_1$-conjugate of $h$. For $h\ne e$, these conjugates generate a
dense normal subgroup of $G_1$. Since $G_1$ is center-free and simple,
their common centralizer is trivial. Lifting the finite word ball $W_{\le q^2}(D)$
gives a finite set $T\subset\Gamma$ such that for all $h \in G_1\setminus \{e\}$,
\begin{equation}\label{eq:sl2-conjugators}
 \bigcap_{t\in T}C_{G_1}(t_1ht_1^{-1})=\{e\}.
\end{equation}
Fixed conjugations change the bounds in
\eqref{eq:sl2-auxiliary} by only a constant factor. Thus all the
$G_2$-components of $\{t\xi_m t^{-1}:t\in T\}$ lie within
$C_1e^{-\alpha m}$ of $e$. Given  $s>0$ small, choose $m$ so that $C_1e^{-\alpha m}\le s<C_1e^{-\alpha(m-1)}$, and define
\[
 \mathcal M_s:=\{t\xi_m t^{-1}:t\in T\}.
\]
The $G_1$-heights of elements in $\mathcal{M}_s$ are bounded by $Cs^{-b}$, with $b=\tau/\alpha$ according to \eqref{eq:sl2-auxiliary}.
Together with \eqref{eq:sl2-conjugators}, the sets $\mathcal{M}_s$ are now the mesh we were looking for.
\smallbreak

Now conjugation action on the mesh together with the discreteness of $\Gamma$ will conclude.
Recall that for a right-invariant Riemannian distance $d$ on a Lie group,
conjugation by $g$ is $\|\operatorname{Ad}(g)\|_{\mathrm{op}}$-Lipschitz.
The triangle inequality consequently gives
\begin{equation}\label{eq:sl2-conjugation-displacement}
 \begin{aligned}
 d(ghg^{-1},h)
 &\le(\|\operatorname{Ad}(g)\|_{\mathrm{op}}+1)d(h,e),\\
 d(ghg^{-1},h)
 &\le(\|\operatorname{Ad}(h)\|_{\mathrm{op}}+1)d(g,e).
 \end{aligned}
\end{equation}
For the first bound, compare both points with $e$. For the second,
right invariance gives
\[
 d(ghg^{-1},h)=d(g,hgh^{-1}))
 \le d(g,e)+d(hgh^{-1},e).
\]

\smallbreak

 We will use the following elementary consequence of the discreteness of $\Gamma$ and right invariance of $d$: there is $r > 0$ such that if $\gamma, \xi \in \Gamma$ satisfy $d_{G_1}(\xi_1,\gamma_1) < r$ and $d_{G_2}(\xi_2, \gamma_2) < r$, then $\xi = \gamma$. Indeed, then $\xi\gamma^{-1}$ is in $\gamma$ and within distance at most $2r$ of the identity. For $r$ small enough it implies $\xi = \gamma$. 
 
 Fix now $\gamma\in\Gamma$ with $\gamma_1\ne e$,
and put $R=N_{G_2}(\gamma_2)$ and $\varepsilon=d_{G_1}(\gamma_1,e)$.
Applying the two bounds in
\eqref{eq:sl2-conjugation-displacement} to the respective factors
gives, for every $\xi=(\xi_1,\xi_2)\in\mathcal M_s$,
\begin{align}
 d_{G_2}(\gamma_2\xi_2\gamma_2^{-1},\xi_2)
     &\le (R+1)s,\label{eq:sl2-commutator-2}\\
 d_{G_1}(\gamma_1\xi_1\gamma_1^{-1},\xi_1)
     &\le (Cs^{-b}+1)\varepsilon.
       \label{eq:sl2-commutator-1}
\end{align}
Take $s=a/R$, where $a>0$ is fixed and sufficiently small.
Since $R\ge1$, the first bound is at most $2a < r$.
If $\varepsilon<cR^{-b}$, with $c$ sufficiently small, the second
is also smaller than $r$. So
$\gamma\xi\gamma^{-1}=\xi$ for every $\xi\in\mathcal M_s$.
The common-centralizer condition in \eqref{eq:sl2-mesh} forces
$\gamma_1=e$, a contradiction. This proves the claimed bound with
$B=b$. Local comparison of the metric with the matrix
norm, followed by decreasing $c$, gives \eqref{eq:sl2-height}.
\end{proof}

In particular, if $S$ is a fixed finite subset of $p_1(\Gamma)$,
choose lifts of its elements to $\Gamma$. Their complementary
heights grow at most exponentially with word length. Hence there
is $D_S>0$ such that
\begin{equation}\label{eq:sl2-word-separation}
  \forall w\in W_{\le n}(S) \setminus Z(G_1)), \qquad \|\operatorname{Ad}(w)-I\|\ge e^{-D_S n}
.
\end{equation}
Here $W_{\le n}(S)$ denotes the ball of radius $n$ for the generators
$S\cup S^{-1}$.
\smallbreak

We next explain how escape from identity neighbourhoods helps establish escape from subgroups.
On $\overline{G_1}=\mathrm{PSL}_2(\mathbb R)$, use the distance
$d(g,h)=\|\operatorname{Ad}(g)-\operatorname{Ad}(h)\|$ and write
$H^{(\delta)}$ for the $\delta$-neighborhood of $H$.

\begin{lemma}[Words near a proper subgroup]\label{lem:sl2-cyclic}
Let $S=\{a,b\}$ freely generate a subgroup $F<\overline{G_1}$ and
satisfy \eqref{eq:sl2-word-separation}. There is $C>0$ such that,
for every $n\ge1$ and every proper connected closed subgroup
$H<\overline{G_1}$, the set
\[
 W_{\le n}(S)\cap H^{(e^{-Cn})}
\]
is contained in a cyclic subgroup of $F$.
\end{lemma}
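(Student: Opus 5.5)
The argument combines two facts. First, proper connected closed subgroups of $\mathrm{PSL}_2(\mathbb R)$ are metabelian, so elements lying in them satisfy a double-commutator law. Second, \eqref{eq:sl2-word-separation} converts "approximately satisfies the law" into "exactly satisfies the law" for words of bounded length. A two-generator subgroup of a free group satisfying a nontrivial law must be cyclic.

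\textbf{Step 1: metabelian law.} Every proper connected closed subgroup $H<\overline{G_1}$ is conjugate into the upper triangular Borel subgroup, or is a compact or split torus, or a unipotent subgroup. In each case $H$ is $2$-step solvable, so $[[x,y],[z,w]]=e$ for all $x,y,z,w\in H$.

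\textbf{Step 2: approximate law near $H$.} Let $x,y,z,w\in W_{\le n}(S)\cap H^{(\delta)}$. Choose $x',y',z',w'\in H$ within $\delta$ of them. Conjugation and multiplication are Lipschitz on the relevant sets, with constants bounded by $\|\operatorname{Ad}(g)\|_{\mathrm{op}}\le e^{An}$, where $A$ depends only on $S$. Hence
\[
\|\operatorname{Ad}([[x,y],[z,w]])-I\|\le e^{A'n}\delta .
\]
The element $[[x,y],[z,w]]$ is a word of length at most $16n$ in $S$. Take $\delta=e^{-Cn}$ with $C>A'+16D_S$. Then this bound is strictly below $e^{-16D_S n}$, so \eqref{eq:sl2-word-separation} forces $[[x,y],[z,w]]$ to be central. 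Since we work in $\overline{G_1}=\mathrm{PSL}_2(\mathbb R)$, central means trivial. So the exact law $[[x,y],[z,w]]=e$ holds in $F$ for all such $x,y,z,w$. I expect this step to be the main obstacle. The difficulty is keeping the Lipschitz constants uniform in $H$, which should hold because everything is measured in the fixed ambient metric via the distance to $H$. I would also use the stronger right-invariant comparison if needed.

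\textbf{Step 3: free group conclusion.} Let $K\le F$ be the subgroup generated by $W_{\le n}(S)\cap H^{(e^{-Cn})}$. By Step 2, every double commutator of generators of $K$ is trivial. To upgrade this to $K$ being metabelian, I would simplify: it is enough to check that any two elements $x,y$ of the set commute. For two such elements, apply the law with $z=x$, $w=y^{\,}x$ or similar choices (all words of length $\le 2n$, still close to $H$ after adjusting $C$) to get a nontrivial relation. Then use the Nielsen–Schreier theorem: $\langle x,y\rangle$ is free, and free of rank $2$ would violate the law. Hence $\langle x,y\rangle$ is cyclic, i.e.\ $x$ and $y$ commute.

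In a free group, commuting is transitive on nontrivial elements, and centralizers are cyclic. So all nontrivial elements of the set lie in the centralizer of any one of them, which is a maximal cyclic subgroup of $F$. This gives the claim.
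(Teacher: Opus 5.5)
Your proposal is correct and is essentially the paper's argument. The paper works directly with the two-variable law $P(x,y)=[[x,y],x[x,y]x^{-1}]$ (word length $\le 20n$), which is your metabelian law specialized at $z=x$, $w=xy$; it forces this law exactly via \eqref{eq:sl2-word-separation} and concludes, as you do, from the facts that two noncommuting elements of a free group freely generate and that centralizers are cyclic. One small caution: with the convention $[a,b]=aba^{-1}b^{-1}$, your literal choice $w=yx$ gives $[x,yx]=[x,y]$ and hence a vacuous relation. Take $w=xy$ instead (or fix the other commutator convention), and check that the resulting word is nontrivial in $F_2$.
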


\begin{proof}
Every proper connected subgroup of $\mathrm{PSL}_2(\mathbb R)$ is
abelian or conjugate into the triangular group. In particular it is
$2$-step solvable and satisfies the fixed law
\[
 P(x,y):=[[x,y],x[x,y]x^{-1}]=e.
\]
This is a nontrivial free word of length at most twenty.
\smallbreak

Matrices and inverse matrices of words in $W_{\le n}(S)$ have norms
bounded exponentially in $n$. If $x,y$ in this ball are within
$\delta$ of $H$, approximate them by elements of $H$ and telescope
the products in $P$. We get
\[
\|\operatorname{Ad}(P(x,y))-I\|\le e^{C_0n}\delta
\]
for $\delta\le e^{-C_0n}$, after enlarging $C_0$.
The constant is independent of $H$.
Choose $C>C_0+20D_S$. With $\delta=e^{-Cn}$,
\eqref{eq:sl2-word-separation} forces $P(x,y)=e$.
Two noncommuting elements of a free group freely generate, so this
nontrivial word cannot vanish on them. Thus all elements in the
displayed intersection commute. The centralizer of a nontrivial
element of a free group is cyclic, which proves the claim.
\end{proof}

We now use this control near proper subgroups to obtain the escape estimates needed to apply \cite{BISG}.

\begin{proof}[Proof of Proposition~\ref{prop:sl2-projection}]
First take $G_1=\mathrm{SL}_2(\mathbb R)$, put $\Delta=p_1(\Gamma)$,
and write $\overline{G_1}=\mathrm{PSL}_2(\mathbb R)$.
Choose a free pair $S\subset\Delta$ sufficiently close to the
identity, using \cite[Theorem~1.1 and its proof]{BG}.
Its free group embeds in $\overline{G_1}$.
Lemma~\ref{lem:height-separation} gives escape from exponentially
small identity neighbourhoods for nontrivial words, so
Lemma~\ref{lem:sl2-cyclic} applies with a fixed constant $C>0$.
\smallbreak

We use the packing construction in
\cite[proof of Theorem~3.1, Section~3.3, Part~1]{BISG}.
Applied to $S$ in $\mathrm{SL}_2(\mathbb R)$, it gives constants
$b,\beta>0$ and, for all large $\ell$, freely independent finite
sets $T_\ell\subset\Delta$ such that
\[
 \begin{gathered}
 |T_\ell|\ge e^{b\ell},\qquad
 T_\ell^{\pm1}\subset W_{\le6\ell}(S),\\
 \max_{t\in T_\ell^{\pm1}}\{\|t-I\|,\|\operatorname{Ad}(t)-I\|\}
 \le\epsilon_\ell:=e^{-\beta\ell}.
 \end{gathered}
\]
Let $\mu_\ell$ be uniform on $T_\ell\cup T_\ell^{-1}$ and let
$\overline\mu_\ell$ be its image in $\overline{G_1}$.
The standard free-group random-walk estimate bounds each atom of
$\mu_\ell^{*k}$ by $(\sqrt2e^{-b\ell/2})^k$.
Note that a cyclic subgroup of $\langle S\rangle$ meets the ball of radius $k$
for the generating set $T_\ell$ in at most $2k+1$ elements.
Lemma~\ref{lem:sl2-cyclic} therefore gives, uniformly over proper
connected closed subgroups $H<\overline{G_1}$,
\begin{equation}\label{eq:sl2-escape}
 \overline\mu_\ell^{*k}(H^{(e^{-6C\ell k})})
 \le(2k+1)(\sqrt2e^{-b\ell/2})^k
 \le e^{-b\ell k/4}
\end{equation}
for every $k\ge1$ and all large $\ell$.
\smallbreak

Thus there are $d_1,d_2>0$, independent of $\ell$, such that
for every fixed large $\ell$ and all sufficiently small $\delta$,
\begin{equation}\label{eq:sl2-bisg}
 \overline\mu_\ell^{*2n}(H^{(\delta)})\le\delta^{d_1},
 \qquad
 n=\left\lfloor
 d_2\frac{\log(1/\delta)}{\log(1/\epsilon_\ell)}
 \right\rfloor.
\end{equation}
For example, take $d_1=b/(96C)$ and $d_2=\beta/(24C)$.
This uniformity as $\epsilon_\ell\to0$ is the hypothesis of
\cite[Theorem~6.7]{BISG}, which gives restricted convolution
estimates tending to zero on every relatively compact measurable
set. Since the supports approach the identity,
\cite[Proposition~2.6]{BISG} gives local spectral gap for the image
of $\Delta$ in $\overline{G_1}$.
The covering argument in \cite[Section~2.4]{BISG} transfers the
restricted estimates to $\mathrm{SL}_2(\mathbb R)$, where
Proposition~2.6 applies again.
If the original projected factor is $\mathrm{PSL}_2(\mathbb R)$,
lift $\Gamma$ through the finite covering
$\mathrm{SL}_2(\mathbb R)\times G_2\to \mathrm{PSL}_2(\mathbb R)\times G_2$
and apply the conclusion for the projected image.
\end{proof}

\begin{proof}[Proof of Theorem~\ref{Cor: SL2 case}]
We first treat the center-free case. By the semisimple form of Borel
density for IRSs \cite[remark following Theorem~2.9]{7samurai},
$(G_x)^\circ$ is almost surely a fixed connected normal subgroup.
If it is nontrivial, it contains a simple factor, which acts trivially
almost everywhere. Irreducibility then forces the probability space
to be a point. Otherwise the stabilizers are discrete.
\smallbreak

Write $G=G_1\times G_2$, with $G_1=\mathrm{PSL}_2(\mathbb R)$.
By Lemma~\ref{lem:dense-projections}, either the stabilizers are
central almost surely or their projections to both $G_2$ and $G_1$
are dense. In the latter case Proposition~\ref{prop:sl2-projection}
supplies local spectral gap for $p_1(G_x)\curvearrowright G_1$.
Theorem~\ref{Theorem: A local spectral gap criterion} then makes
almost every stabilizer a lattice. The constants in the projection
argument may depend on $x$, as allowed by that theorem.
\smallbreak

For finite center, pass to the action of
$\overline G:=G/Z(G)$ on $\overline X:=X/Z(G)$.
It remains irreducible and $\overline G$ is a direct product of
center-free simple groups. For $\overline x$ the image of $x$,
\[
 \overline G_{\overline x}=G_xZ(G)/Z(G).
\]
If the quotient action is concentrated on a fixed point, the
original probability is supported on a finite central orbit.
Connectedness of $G$ and ergodicity then force a single fixed
point. If $\overline G_{\overline x}$ is trivial, $G_x$ is central.
If it is a lattice, its inverse image in $G$ is a lattice and
contains $G_x$ with index at most $|Z(G)|$. Hence $G_x$ is a
lattice as well.
\end{proof}

\end{document}